\documentclass[9pt,technote]{IEEEtran}
\usepackage{algorithm}
\usepackage{algorithmic}
\usepackage{color}
\usepackage{float}
\usepackage{ifpdf}
\usepackage{graphicx}
\usepackage{amsfonts}
\usepackage{amsmath}
\usepackage{amssymb}
\usepackage{amsthm}
\usepackage{booktabs}

\newtheorem{proposition}{Proposition}
\newtheorem{theorem}{Theorem}

\newtheorem{lemma}{Lemma}

\newtheorem{remark}{Remark}
\newtheorem{prob}{Problem}

\title{Stochastic Sensor Scheduling for Energy Constrained Estimation in Multi-Hop Wireless Sensor Networks}

\author{Yilin Mo$^*$, Emanuele Garone$^\dag$, Alessandro Casavola$^\dag$, Bruno Sinopoli$^*$
\thanks{
$*$: Department of Electrical and Computer Engineering, Carnegie Mellon University, Pittsburgh, PA. Email: {ymo@andrew.cmu.edu, brunos@ece.cmu.edu}
}
\thanks{
$\dag$: Dipartimento di Elettronica, Informatica e Sistemistica, Universit\`{a} degli Studi della Calabria, Via Pietro Bucci, Cubo 42-c, Rende (CS), 87036, Italy. Email: {egarone, casavola@deis.unical.it}
}
\thanks{
This research was supported in part by CyLab at Carnegie Mellon under grant DAAD19-02-1-0389 from the Army Research Office. Foundation. The views and conclusions contained here are those of the authors and should not be interpreted as necessarily representing the official policies or endorsements, either express or implied, of ARO, CMU, or the U.S. Government or any of its agencies.}
}

\begin{document}
\maketitle
\begin{abstract}
Wireless Sensor Networks (WSNs) enable a wealth of new applications where remote estimation is essential. Individual sensors simultaneously sense a dynamic process and transmit measured information over a shared channel to a central fusion center. The fusion center computes an estimate of the process state by means of a Kalman filter. In this paper we assume that the WSN admits a tree topology with fusion center at the root. At each time step only a subset of sensors can be selected to transmit observations to the fusion center due to a limited energy budget. We propose a stochastic sensor selection algorithm that randomly selects a subset of sensors according to certain probability distribution, which is opportunely designed to minimize the asymptotic expected estimation error covariance matrix. We show that the optimal stochastic sensor selection problem can be relaxed into a convex optimization problem and thus solved efficiently. We also provide a possible implementation of our algorithm which does not introduce any communication overhead. The paper ends with some numerical examples that show the effectiveness of the proposed approach.
\end{abstract}
\begin{IEEEkeywords}
  Wireless Sensor Networks,Optimization, State Estimation.
\end{IEEEkeywords}
\section{Introduction}\label{sec:introduction}
Sensor networks span a wide range of applications, including environmental monitoring and control, health care, home and office automation and traffic control \cite{wireless_sensor_network}.  In these applications, estimation algorithms like Kalman filters can be used to undertake state estimation tasks based on lumped-parameter models of distributed physical phenomena. However, WSN operating constraints, such as power limitations, often make it difficult to collect data from every sensor at the sampling rates required for an effective monitoring. These considerations have led to the development of sensor scheduling strategies able to select, at each time step, the subset of reporting sensors that minimizes a certain cost function, usually related to the expected estimation error.

Sensor network energy consumption minimization and, consequently, lifetime maximization problems have been active areas of research over the past few years, as researchers realized that energy limitations constitute one of the major obstacles to the extensive adoption of such a technology. Sensor networks energy minimization is typically accomplished via efficient MAC protocols~\cite{mac_sensor_network} or via efficient scheduling of sensor states~\cite{wake_up_scheme, sleep_schedule}. In \cite{lifetime_analysis}, Xue and Ganz showed that the lifetime of sensor networks is influenced by transmission schemes, network density and transceiver parameters with different constraints on network mobility, position awareness and maximum transmission ranges.  Chamam and Pierre~\cite{optimal_state_scheduling} proposed a sensor scheduling scheme capable of optimally putting sensors in active or inactive modes. Shi et. al~\cite{Ling_cdc07} considered sensor energy minimization as a mean to maximize the network lifetime while guaranteeing a desired quality of the estimation accuracy. Moreover in~\cite{shi-febid08}, they proposed a sensor tree scheduling algorithm which leads to longer network lifetimes.

Conversely, optimizing the performance of sensor networks under given energy constraints, which can be seen as the dual problem of network energy minimization, has also been studied by several researchers. Such a constrained optimization problem has been studied for continuous-time linear systems in \cite{stochasticcontrol} and \cite{sensorprecision}. In \cite{hmmsensorselection}, the author computed the optimal sensor scheduling for the estimation of a Hidden Markov Model based system. For discrete-time linear systems, methods like dynamic programming \cite{dynmaicprogrammingsensorselection} or greedy algorithms \cite{oshmansensor} have been proposed to find the optimal sensor scheduling over long time horizons.

Another important contribution on the topic is the work of Joshi and Boyd \cite{Joshi}, where a general single-step sensor selection problem was formulated and solved by means of convex relaxation techniques. Such a paper provides a very general framework that can handle various performance criteria and energy and topology constraints. Following this work, Mo et al. \cite{Mo_ascc09, Mo_necsys09, Mo_allerton09} showed that multi-step sensor selection problems can also be relaxed into convex optimization problems and thus efficiently solved.

A very different approach with respect to the above deterministic solutions has been proposed in \cite{Gupta_sensorselection_automatica}. There, the authors proposed a stochastic sensor selection algorithm in networks endowed with star topology. The algorithm is based on the idea that at each time step the sensors randomly and autonomously choose if sending measurements or not according to a certain probability distribution. Therefore, the probability distributions become the optimization parameters, which are chosen to minimize the expected steady-state error covariance matrix. The authors argued that such a stochastic approach has several advantages over the conventional approaches: for example, it is easier to take into account random communication channel failures, which is a quite common issue in wireless sensor networks. The most relevant limitation of the results presented in that paper hinges upon the assumption that only one sensor at the time can transmit its data at each sampling period, which is a strong assumption and requires a precise coordination between sensors.

In the present work, we go further on by proposing a stochastic sensor selection algorithm that not only overcomes the above limitation but also solves the routing problem under the assumption that wireless sensor network has a tree topology. The proposed approach may be summarized as follows. The sensors are randomly selected according to a certain probability distribution that is designed so as to minimize the expected asymptotic estimation error covariance matrix while maintaining the connectivity of the network. In order to make the determination of the above probability distribution tractable, the problem is relaxed and, instead of the original objective function, a lower bound to the expected estimation error covariance matrix is minimized. Such a choice reduces the optimal sensor scheduling design problem into a convex optimization problem.
The advantages of the stochastic schedule over deterministic schedule can be summerized as threefold: 
\begin{enumerate}
\item The search space of the stochastic formulation is continuous and convex, while the search space of deterministic formulation is discrete. Hence, the search of the optimal deterministic schedule can be formulated as an integer programming problem, which makes the task potentially harder than the stochastic counterpart.
\item The expected performance of the stochastic formulation can be better than the deterministic one. Moreover, due to the ergodicity of the random Riccati equation, we can prove that under mild assumptions almost every sample path of the stochastic schedule is better than the deterministic one if the system runs long enough.
\item The stochastic schedule can be implemented with the same computation and communication cost as the deterministic one. 
\end{enumerate}

The rest of the paper is organized as follows. In Section~\ref{sec:problem} we describe our system and communication model and introduce the deterministic sensor and stochastic selection problems. We further present an ergodicity result on the performance of the stochastic sensor scheduling method to show that stochastic formulation could improve the performance. In Section~\ref{sec:main}, we relax the stochastic sensor selection algorithm to render it solvable and propose an possible implementation of our algorithm. Some numerical examples on the monitoring of a diffusion process are provided in Section~\ref{sec:simulation} and, finally, Section~\ref{sec:conclusion} concludes the paper.

\section{Sensor Selection: From Deterministic to Stochastic Formulation}\label{sec:problem}
\subsection{System Description}
Consider the following discrete-time LTI system
\begin{equation}
    x_{k+1} = Ax_k + w_k
    \label{eq:systemdescription}
\end{equation}
where $x_k \in \mathbb R^n$ represents the state and $w_k\in \mathbb R^n$ the disturbance. It is assumed that $w_k$ and $x_0$ are independent Gaussian random vectors, $x_0 \sim \mathcal N(0,\;\Sigma)$ and $w_k \sim \mathcal N(0,\;Q)$, where $\Sigma,\,Q > 0$ are positive definite matrices. A wireless sensor network composed of $m$ sensing devices $s_1,\ldots,s_m$ and one fusion center $s_0$ is used to monitor the state of system \eqref{eq:systemdescription}. The measurement equation is 
\begin{equation}
  y_{k} = C x_k + v_k,
\end{equation}
where $y_k = [y_{k,1}' , y_{k,2}' ,\ldots, y_{k,m}']' \in \mathbb R^m$ is the measurement vector\footnote{The $'$ on a matrix always means transpose.}. Each element $y_{k,i}$ represents the measurement of sensor $i$ at time $k$. $C = \left[C_1',\ldots,C_m'\right]'$ is the observation matrix and the matrix pair $(C,\,A)$ is assumed observable\footnote{The assumption of observability is without loss of generality since we could perform Kalman decomposition and only consider the observable space even if the system is not observable.}. $v_k \sim \mathcal N(0,\;R)$ is the measurement noise, assumed to be independent of $x_0$ and $w_k$. We also assume that the covariance matrix $R = diag(r_1,\ldots,r_m) $ is diagonal, which means that the measurement noise at each sensor is independent of all others and nonsingular, that is $r_i > 0, i=1,...,m$.

Let's introduce an oriented communication graph $G = \{V\,,E\}$ in order to model the communication amongst nodes, where the vertex set $V = \{s_0,\,s_1,\ldots,s_m\}$ contains all sensor nodes, including the fusion center. The set of edges $E \subseteq V \times V$ represents the available connections, i.e.  $(i,j) \in E$ implies that the node $s_i$ may send information to the node $s_j.$ Moreover, it is assumed that each node of the sensor network acts as a gateway for a specific number of other nodes, which means that every time it communicates with another node it sends, in a single packet, its own measurements collected together with all data received from the other nodes.

We always assume that, for every sensor in the network, there exists one and only one communication path to the fusion center, i.e. the sensor network has a directed tree topology. Moreover, we assume that each link has an associated weight $c(e_{i,j})$ which indicates the energy consumed when $s_i$ directly transmits a packet to $s_j$. For the sake of legibility, we sometimes abbreviate $c(e_{i,j})$ as $c_i$, $i= 1,\ldots,m$ because, in the assumed topology, each sensor node has only one outgoing edge.
\begin{remark}
  The tree topology assumption may be a restrictive hypothesis in the general case where usually one sensor can communicate with several nearby nodes. However, it is worth to remark that typical communication network graphs can be approximated by a collection of ``representative" spanning trees (e.g. the first $m$ spanning trees of the spanning tree enumeration \cite{EnumerateSpanningTrees}).
\end{remark}
\subsection{Stochastic v.s. Deterministic Sensor Selection}  
Because sensor measurements usually contain redundant information, in order to reduce the energy consumption it would be highly desirable to use a minimal subset of sensors at each sampling time. However, in a tree topology, we cannot select arbitrary subsets of nodes but we are forced to select nodes (and connections) such that, for each selected node, there exists a communication path to the fusion node. As a result, any possible transmission topology of $G$ is a subtree $T = \{V_T\,,E_T\}$, with $s_0 \in V_T$,  $V_T\subseteq V$ and $E_T\subseteq E$. Hereafter, $V_T$ denote the selected subset of sensors and $E_T$ the communication paths used by the sensors to transmit observations to the fusion center. We also denote by $\mathcal T$ the set of all possible transmission topologies $T$ (i.e. the set of all possible subtrees of $G$ containing $s_0$).

It is straightforward to show that, for a transmission tree $T$, the total transmission energy consumption is given by\footnote{Here we assume that $cost(e_{i,j})$ is constant regardless of number of observations contained in the packet. This is realistic in most of the cases, especially when measurements are of simple type, such as low precision scalar values, and the transmission overhead, e.g. header, handshaking protocol, dominates the payload.}
\begin{displaymath}
  \mathcal E(T) = \sum_{e \in E_T} c(e).    
\end{displaymath}

Suppose that at each time $k$ we randomly select a tree $T$ from $\mathcal T$  and each sensor in $T$ transmits its observation back to the fusion node according to the topology $T$. Let $\pi_{k,T}$ be the probability that the transmission tree $T$ is selected at time $k$. Then, we may define
\begin{equation}
  p_{k,i} \triangleq \sum_{T\in \mathcal T, s_i \in V_T} \pi_{k,T}
\end{equation}
the marginal probability that sensor $i$ is selected at time $k$. Further, let us define $\mathbf p_k = [p_{k,1},\ldots,p_{k,m}]'$ and $\mathbf \pi_k=[\pi_{k,T_1},\ldots,\pi_{k,T_{|\mathcal T|}}]'$ to be the vectors of all $p_{k,i}$s and $\pi_{k,T}$s respectively. We can introduce the binary random variable $\delta_{k,T}$ such that $\delta_{k,T}=1$ if the transmission tree $T$ is selected at time $k$ and $\delta_{k,T}=0$ otherwise. Similarly, let us also define the binary random variable $\gamma_{k,i}$ to be $1$ if sensor $i$ is selected at time $k$ and $0$ otherwise. It is well known that the Kalman filter is still the optimal filter\cite{Gupta_sensorselection_automatica}. Suppose that $V_T = \{s_0,\,s_{i_1},\ldots,s_{i_j}\}$, then we can define
\begin{eqnarray}
  C_{T} & \triangleq & [C_{i_1}',\,C_{i_2}',\ldots,C_{i_j}']',  R_{T}  \triangleq   diag(r_{i_1},\ldots,r_{i_j}).
\end{eqnarray}
It can be proved that the estimation error covariance $P_k$ and the information matrix $Z_k$\footnote{The information matrix is the inverse of estimation error covariance} of the Kalman filter satisfy the following recursive equations:
\begin{align}
  P_k &= \left(P_{k|k-1}^{-1} + C_T'R_T^{-1}C_T\right)^{-1},
\label{eq:informationmatrix}
\end{align}
where $ P_{k|k-1}= AP_{k-1}A' + Q.$
Let us define $\mathbf g_{\mathbf \pi_k,k}$ as a random operator such that
\begin{equation}
  \mathbf g_{\mathbf \pi_k,k}(X) \triangleq \sum_{T\in\mathcal T}\delta_{k,T} g_T(X),
\end{equation}
where $P(\delta_{k,T} = 1) = \pi_{k,T}$, and
\begin{equation}
  g_T(X) \triangleq \left[(AXA'+Q)^{-1} + \sum_{s_i \in V_{T},\, s_i\neq s_0} \frac{C_iC_i'}{r_i}\right]^{-1}.
\end{equation}
We have
\begin{equation}
  P_k = \mathbf g_{\mathbf \pi_k,k}(P_{k-1}).
\end{equation}
In this paper we are more interested in a time-invariant schedule $\pi_T$. Hence, let us define 
\begin{equation}
  \mathbf g^{\infty}_\pi(X)\triangleq \lim_{k\rightarrow\infty}\mathbb E (g_{\pi,k}\circ g_{\pi,k-1} \circ\cdots\circ g_{\pi,1})(X),
\end{equation}
when the limit exists. Otherwise, $\mathbf g^{\infty}_\pi(X)$ is infinity. Note that $\mathbf g^{\infty}_\pi$ is a deterministic function, which indicates the limit performance of stochastic sensor selection when the fixed schedule $\pi$ is used. It is easy to see that
\begin{displaymath}
  \lim_{k\rightarrow\infty} \mathbb EP_k = \mathbf g^{\infty}_\pi(\Sigma),
\end{displaymath}
when the fixed schedule $\pi$ is used and $ \mathbf g^{\infty}_\pi(\Sigma)<\infty$.

Since transmission trees are randomly selected, $P_k$ is a random matrix. Thus, we only minimize the asymptotic expected estimation error covariance matrix while requiring that the expected energy consumption does not exceed a designated threshold $\mathcal E_d$. The problem of finding the optimal fixed stochastic schedule that minimizes the expected asymptotic estimation error covariance matrix can be formulated as
\begin{prob}[Fixed Random Schedule that Optimizes Expected Asymptotic Performance]\label{randoptasymptotic}
  \begin{align*}
    &\mathop{\textrm{minimize}}\limits_{\mathbf \pi }&
    &trace(\mathbf g_\pi^\infty(\Sigma))\\
 &\textrm{subject to}&
 &   \sum_{T\in \mathcal T} \pi_{T}\mathcal E(T)\leq\mathcal E_d,\, \pi_{T}\geq 0,\, \sum_{T\in\mathcal T} \pi_{T} = 1.
  \end{align*}
\end{prob}

Since the deterministic schedule can be seen as a subset of stochastic schedule, where $\pi_{k,T}$ are forced to be either $0$ or $1$, the problem of finding the optimal fixed deterministic schedule that minimizes the asymptotic estimation error covariance matrix can be formulated as
\begin{prob}[Fixed Deterministic Schedule that Optimizes Asymptotic Performance]\label{detoptasymptotic}
  \begin{align*}
    &\mathop{\textrm{minimize}}\limits_{\mathbf \pi }&
    &trace(\mathbf g_\pi^\infty(\Sigma))\\
 &\textrm{subject to}&
 &   \sum_{T\in \mathcal T} \pi_{T}\mathcal E(T)\leq\mathcal E_d,\, \pi_{T}=0\;or\;1,\, \sum_{T\in\mathcal T} \pi_{T} = 1.
  \end{align*}
\end{prob}

\begin{remark}
  In Problem~\ref{randoptasymptotic} we require that the expected energy consumption does not exceed a certain energy budget. In real applications different constraints may be considered (e.g. requirements on the sensor lifetime). However, it can be shown (see e.g. \cite{Joshi}) that many of these constraints can be easily integrated into the above framework.
\end{remark}

\begin{remark}
It is worth noticing that at each sampling time, the energy cost of deterministic schedule cannot exceed the designated threshold $\mathcal E_d$. This is important to be remarked in order to understand why stochastic sensor selections, being allowed to use more energy at one single sampling period, can achieve better performance than the above deterministic formulation.

It is also worth noticing that a periodic schedule can also be formulated as Problem~\ref{detoptasymptotic} by enlarging the state space. As a result, all the results in this section can be generalized in to periodic schedule. However, in Section~\ref{sec:main} we focus only on time-invariant schedule.
\end{remark}
%

\begin{remark}
  \label{remark:stochasticvsdeterministic}
  Another main difference between Problem~\ref{randoptasymptotic} and Problem~\ref{detoptasymptotic} is that, the search space of deterministic schedule is discrete, which that of stochastic schedule is continuous and convex. This brings several advantages. First, the deterministic schedule can be seen as a particular kind of random schedule, where $\pi_{k,T}$s are binary. As a result, stochastic sensor selection strategies could possibly improve the sensor selection performance (at least in the expected sense). The second advantage is that the feasible set $\pi_{k,T}$ is convex, which allows us to further manipulate the problem into a convex form.
\end{remark}

As is commented above, the expected performance of the optimal stochastic schedule is better than the deterministic counter part. Let $\pi^*$ be the optimal stochastic schedule and $\pi^*_d$ be the optimal deterministic schedule, we have
\begin{displaymath}
  \lim_{k\rightarrow\infty} \mathbb E\, trace(P_k(\pi^*))  \leq \lim_{k\rightarrow\infty}  trace(P_k(\pi_d^*)),
\end{displaymath}
which implies that
\begin{displaymath}
  \lim_{N\rightarrow\infty} \sum_{k=1}^N \frac{1}{N}\mathbb E\, (traceP_k(\pi^*))  \leq  \lim_{N\rightarrow\infty} \sum_{k=1}^N \frac{1}{N} traceP_k(\pi_d^*).
\end{displaymath}

To strength this result, the following theorem states that if the optimal stochastic schedule is allowed to run for a long time, then almost every sample path of the stochastic schedule is potentially better than deterministic one in the average sense.
\begin{theorem}
  Suppose that the fixed schedule $\mathbf \pi^*$ is the solution of Problem~\ref{randoptasymptotic}. If the linear system and $\mathbf \pi^*$ satisfy the following assumptions:
  \begin{enumerate}
    \item $A$ is invertible, $(A,Q^{1/2})$ is controllable;
    \item there exists a transmission topology $T$ with $\pi^*_{T} > 0$ such that  $(C_T,\,A)$ is observable
  \end{enumerate}
and the stochastic process $\{P_k\}$ satisfies: $P_k = \mathbf g_{\pi^*,k} (P_{k-1}),\, P_0 = \Sigma,$ then almost surely the following inequality holds
  \begin{equation}
    \lim_{N\rightarrow \infty} \frac{1}{N}\sum_{k=1}^N trace(P_{k})  \leq trace(\mathbf g_{\pi^*}^\infty(\Sigma)).
    \label{eq:ergodic}
  \end{equation}
  \label{theorem:ergodic}
\end{theorem}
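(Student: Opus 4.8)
The plan is to recognize $\{P_k\}$ as a time--homogeneous Markov chain on the cone $\mathcal S$ of $n\times n$ positive semidefinite matrices: indeed $P_k=\mathbf g_{\pi^*,k}(P_{k-1})$, and the random operators $\mathbf g_{\pi^*,k}$ are i.i.d.\ and independent of the past. One then invokes a strong law of large numbers for this chain, identifying the almost sure limit of the Ces\`aro averages of $trace(P_k)$ with the $\mu$--average of $trace$ for an invariant measure $\mu$, and finally bounds that average. Throughout we may assume $trace(\mathbf g^\infty_{\pi^*}(\Sigma))<\infty$, since otherwise \eqref{eq:ergodic} is vacuous; by the definition of $\mathbf g^\infty_{\pi^*}$ this assumption is exactly $\sup_k\mathbb E\,trace(P_k)<\infty$, a uniform stability bound that will be the key quantitative input.

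The qualitative structure would be set up as follows. Each $g_T$ is continuous and monotone on $\mathcal S$ in the Loewner order, so the chain is Feller. The bound $\sup_k\mathbb E\,trace(P_k)<\infty$ makes the family of laws $\{\mathrm{Law}(P_k)\}$ tight, because the sublevel sets $\{X\in\mathcal S:trace(X)\le R\}$ are compact and Markov's inequality controls the tails; tightness plus the Feller property yields, via a Krylov--Bogolyubov argument (a weak limit point of $\frac1N\sum_{k=1}^N\mathrm{Law}(P_k)$), an invariant probability measure $\mu$. To upgrade this to ergodicity one uses the hypotheses: Assumption~2 supplies a tree $T^*$ with $\pi^*_{T^*}>0$ and $(C_{T^*},A)$ observable, so if $T^*$ is drawn $o$ times in a row ($o$ the observability index, an event of probability at least $(\pi^*_{T^*})^{o}>0$ at any instant), then, $A$ being invertible, the iteration $g_{T^*}\circ\cdots\circ g_{T^*}$ maps \emph{every} $X\in\mathcal S$ below a fixed matrix $\bar X=\bar X(A,Q,C_{T^*},R_{T^*})$ --- the standard ``observability implies a uniform a~posteriori bound'' fact. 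This is a minorization: from any state the chain enters the compact set $\{0\le X\le\bar X\}$ after $o$ steps with fixed positive probability; combined with Assumption~1 (controllability of $(A,Q^{1/2})$, which keeps the iterates uniformly non--degenerate and spreads the process--noise mass so the chain is $\psi$--irreducible), this yields that $\{P_k\}$ is positive Harris recurrent with a \emph{unique} invariant measure $\mu$, that $\mathrm{Law}(P_k)\Rightarrow\mu$, and that the SLLN for positive Harris chains applies to the nonnegative functional $trace$.

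The SLLN then gives, almost surely, $\frac1N\sum_{k=1}^N trace(P_k)\to\int trace(X)\,\mu(dX)$ (with the convention that this is $+\infty$ when $trace\notin L^1(\mu)$). To bound the right--hand side, observe that since $\mathrm{Law}(P_k)\Rightarrow\mu$ and $trace$ is nonnegative and continuous, the Portmanteau theorem applied to the truncations $trace\wedge M$ and then letting $M\to\infty$ gives $\int trace(X)\,\mu(dX)\le\liminf_k\mathbb E\,trace(P_k)=trace(\lim_k\mathbb E P_k)=trace(\mathbf g^\infty_{\pi^*}(\Sigma))<\infty$, using linearity and continuity of $trace$ and the definition of $\mathbf g^\infty_{\pi^*}$. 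Hence $trace\in L^1(\mu)$, the SLLN limit is finite, and the two displays combine to give \eqref{eq:ergodic}. Note that this Fatou--type step only produces ``$\le$''; obtaining equality would require uniform integrability of $\{trace(P_k)\}$, which is not needed here.

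The main obstacle is the ergodicity step. The ``reset'' into $\{0\le X\le\bar X\}$ is available only when $T^*$ is selected $o$ consecutive times, which occurs at a geometric rate, while in the intervening steps $trace(P_k)$ can grow like $\|A\|^{2k}$; consequently a naive one--step (or even $o$--step) Foster--Lyapunov drift in $trace$ is not contractive for unstable $A$. The way around this is precisely the reduction made at the outset: the hypothesis $\mathbf g^\infty_{\pi^*}(\Sigma)<\infty$ \emph{is} the uniform bound $\sup_k\mathbb E\,trace(P_k)<\infty$, which already delivers tightness and hence an invariant measure, so the observable--tree minorization only has to supply the \emph{qualitative} properties ($\psi$--irreducibility, uniqueness of $\mu$, Harris recurrence) rather than quantitative stability. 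Getting this division of labor right --- tightness from the moment bound, irreducibility and minorization from Assumptions~1--2 --- is where the real work lies; the Feller property, the SLLN, and the Portmanteau bound are then routine.
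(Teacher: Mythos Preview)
Your overall strategy is correct and, in its final step, essentially coincides with the paper's: both truncate $trace(P_k)$ by a constant $M$, use an ergodic/SLLN statement to identify the Ces\`aro limit of the truncated functional with the stationary mean $\mathbb E[\min(trace(\overline P_0),M)]$, bound this above by $trace(\mathbf g^\infty_{\pi^*}(\Sigma))$ via (in your language) a Portmanteau/Fatou argument, and let $M\to\infty$ by monotone convergence. So on that part you and the paper agree almost verbatim.

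The genuine difference is in how ergodicity is obtained. The paper does not build it from Markov-chain first principles; it invokes a result on random Riccati equations (Theorem~3.4 of \cite{randomriccati}) which, under exactly Assumptions~1--2, produces directly an ergodic stationary solution $\{\overline P_k\}$ of the same recursion together with the almost-sure coupling $\|P_k-\overline P_k\|\to 0$. That reference proceeds via contraction of the Riccati map in the invariant (Thompson/Riemannian) metric on positive definite matrices, not via Harris recurrence. This buys the paper a one-line ergodicity step and, importantly, the pathwise coupling, which lets it pass from averages of $\overline P_k$ to averages of $P_k$ without any separate SLLN for the non-stationary chain started at $\Sigma$. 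Your route---Feller property, tightness from the moment bound, Krylov--Bogolyubov, then minorization/$\psi$-irreducibility from the observable tree $T^*$ and controllability of $(A,Q^{1/2})$---is a legitimate alternative and more self-contained, but the $\psi$-irreducibility/Harris step you flag as ``where the real work lies'' is indeed nontrivial for matrix-valued Riccati chains; the paper sidesteps it entirely by citing the contraction-based theory. In short: same endgame, different engine for ergodicity; yours is more elementary in spirit but heavier to make fully rigorous, while the paper's is shorter because it outsources the hard part.
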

\begin{proof}
  It is easy to check that all the assumptions in the Theorem 3.4 of \cite{randomriccati} hold. As a result, there exists an ergodic stationary process $\{\overline P_k\}$ which satisfies $\overline P_k = \mathbf g_{ {\pi^*},k}(\overline P_{k-1})$. Moreover,
  \begin{displaymath}
    \lim_{k\rightarrow \infty} \|P_k -\overline P_k\| = 0.\,a.s.
  \end{displaymath}
  We want to prove that $\mathbb E(trace(\overline P_0))$ is less than or equal to $trace(\mathbf g_\pi^\infty(\Sigma))$ and hence is finite. Because $\overline P_k$ is ergodic, and $P_k$ converges to $\overline P_k$ almost surely, we know that
  \begin{align*}
    &\lim_{N\rightarrow\infty}\frac{1}{N}\sum_{k=1}^N \min(trace(P_k),M) =  \lim_{N\rightarrow\infty}\frac{1}{N}\sum_{k=1}^N \min(trace(\overline P_k),M)\\
    &=\mathbb E[\min(trace(\overline P_0),M)],\;\;\;a.s.
  \end{align*}
  where $M > 0$ is a constant. By the definition of $\mathbf g_\pi^\infty$, we know that
  \begin{displaymath}\begin{split}
   & trace(\mathbf g_\pi^{\infty}(\Sigma))  \geq \lim_{N\rightarrow\infty} \mathbb E\left[\frac{1}{N} \sum_{k=1}^N \min(trace(P_k),M)\right]\\
    & = \mathbb E\left[ \lim_{N\rightarrow\infty} \frac{1}{N} \sum_{k=1}^N \min(trace(P_k),M)\right] = \mathbb E[\min(trace(\overline P_0),M)].
  \end{split}
  \end{displaymath}
The second equality follows from the Dominated Convergence Theorem. Now, let $M\rightarrow \infty$. By Monotone Convergence Theorem it results that
\begin{displaymath}
  \mathbb E[trace(\overline P_0)] = \lim_{M\rightarrow\infty}\mathbb E[\min(trace(\overline P_0),M)]\leq trace(\mathbf g_\pi^{\infty}(\Sigma)),
\end{displaymath}
which proves that $\mathbb E[trace(\overline P_0)] \leq trace(\mathbf g_\pi^\infty(\Sigma))$. Hence, by ergodicity, we obtain
  \begin{align*}
    &\lim_{N\rightarrow \infty} \frac{1}{N}\sum_{k=1}^N trace(P_{k}) =  \lim_{N\rightarrow \infty} \frac{1}{N}\sum_{k=1}^N trace(\overline  P_{k}) = \mathbb E(trace(\overline P_0))\\
    &\leq trace(\mathbf g_\pi^\infty(\Sigma)),\;\;\;a.s.
  \end{align*}
\end{proof}

\begin{remark}
  Combining Remark~\ref{remark:stochasticvsdeterministic} with the results of Theorem~\ref{theorem:ergodic}, we can conclude that the average performance of almost every sample path of the optimal stochastic schedule is better than its deterministic counterpart. 
\end{remark}

Before moving forward, it is worth pointing out that Problem~\ref{randoptasymptotic} are still numerical intractable. In fact:
\begin{enumerate}
  \item it is usually difficult to express $\mathbb EP_{\infty}$ as an explicit function of $\pi_{1,T},\,\ldots,\,\pi_{k,T}$;\footnote{The readers can refer to \cite{sinopoli} for more information.}
  \item since $|\mathcal T|$ is large, the number of optimization variables and constraints may be not polynomial with respect to the number of nodes. 
\end{enumerate}
In the next section, we will devise a possible relaxation method that allows one to overcome the above two problems.

\section{Relaxation and Implementation}\label{sec:main}
In this section, we first relax Problem~\ref{randoptasymptotic} to a convex relaxation problem. We then propose a possible implementation of our stochastic schedule without introducing communication and computation overhead.
\subsection{Relaxation}
In this subsection we consider a convex relaxation of Problem~\ref{randoptasymptotic}. To this end, let us define a lower bound $L_k$ to $\mathbb EP_k$ by means of the following theorem, whose proof is reported in the Appendix.
\begin{theorem}\label{theorem:lowerbound}
  Let $L_0 =  P_0$ and
  \begin{equation}
	  L_k = \left(L_{k|k-1} ^{-1} + \sum_{i=1}^m p_{k,i} \frac{C_iC_i'}{r_i}\right)^{-1},
	  \label{eq:lowerbound1} 
  \end{equation}
  where $L_{k|k-1} = AL_{k-1}A' + Q.$ The following inequalities hold: 
  \begin{equation}
    \mathbb EP_k \geq L_k. 
  \end{equation}
\end{theorem}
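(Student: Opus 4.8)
The plan is to prove the bound by induction on $k$, but not on $\mathbb E P_k$ and $L_k$ directly: the one-step Riccati prediction--update operator is operator concave rather than convex, so conditioning the recursion for $P_k$ and substituting $\mathbb E P_{k-1}$ would make Jensen's inequality point the wrong way. The fix is to pass to the information matrices. Writing $Z_k \triangleq P_k^{-1}$ and $\bar Z_k \triangleq L_k^{-1}$, equation \eqref{eq:informationmatrix} becomes the \emph{additive} recursion $Z_k = \phi(Z_{k-1}) + \sum_{i=1}^m \gamma_{k,i}\, C_iC_i'/r_i$, where $\phi(Z) \triangleq (AZ^{-1}A'+Q)^{-1}$ and $\gamma_{k,i}$ is the indicator that sensor $i$ is selected at time $k$, so that $\mathbb E\gamma_{k,i} = p_{k,i}$ and the vector $(\gamma_{k,1},\dots,\gamma_{k,m})$ is independent of $Z_{k-1}$.

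First I would record two auxiliary facts. (a) $\phi$ is operator monotone increasing and operator concave on the positive definite cone. To see this, apply the Woodbury identity (valid since $Q>0$) to get $\phi(Z) = Q^{-1} - Q^{-1}A\,(Z + A'Q^{-1}A)^{-1}A'Q^{-1}$; now $Z \mapsto (Z + A'Q^{-1}A)^{-1}$ is operator convex and decreasing, being the composition of the operator convex, order-reversing map $W\mapsto W^{-1}$ with an affine increasing map, and conjugating by the constant $-Q^{-1}A$ turns it into an operator concave, increasing map, to which adding the constant $Q^{-1}$ does nothing. (This argument does not require $A$ invertible, unlike the shortcut of writing $\phi$ through $W\mapsto(W^{-1}+Q)^{-1}$.) (b) For any integrable random positive definite matrix $X$, $(\mathbb E X)^{-1} \leq \mathbb E[X^{-1}]$; this is the operator Jensen inequality for $X\mapsto X^{-1}$, or follows at once from the variational identity $v'Y^{-1}v = \max_u (2u'v - u'Yu)$ for $Y>0$.

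Next comes the induction. A one-line check from \eqref{eq:lowerbound1} gives $\bar Z_0 = Z_0 = P_0^{-1}$ and $\bar Z_k = \phi(\bar Z_{k-1}) + \sum_{i=1}^m p_{k,i}\,C_iC_i'/r_i$. The claim is that $\mathbb E Z_k \leq \bar Z_k$ for all $k$. The base case holds trivially. For the step, independence of the time-$k$ selection from the past gives $\mathbb E[Z_k\mid Z_{k-1}] = \phi(Z_{k-1}) + \sum_i p_{k,i}\,C_iC_i'/r_i$, hence $\mathbb E Z_k = \mathbb E[\phi(Z_{k-1})] + \sum_i p_{k,i}\,C_iC_i'/r_i$. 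Operator concavity of $\phi$ (Jensen) gives $\mathbb E[\phi(Z_{k-1})] \leq \phi(\mathbb E Z_{k-1})$, and operator monotonicity together with the inductive hypothesis $\mathbb E Z_{k-1} \leq \bar Z_{k-1}$ gives $\phi(\mathbb E Z_{k-1}) \leq \phi(\bar Z_{k-1})$; chaining, $\mathbb E Z_k \leq \phi(\bar Z_{k-1}) + \sum_i p_{k,i}\,C_iC_i'/r_i = \bar Z_k$. Combining with fact (b), $(\mathbb E P_k)^{-1} \leq \mathbb E[P_k^{-1}] = \mathbb E Z_k \leq \bar Z_k = L_k^{-1}$, and inverting (which reverses the L\"owner order) yields $\mathbb E P_k \geq L_k$.

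I expect the only real obstacle to be fact (a), the operator concavity and monotonicity of the Riccati prediction operator $\phi$; the Woodbury rewrite reduces it to standard properties of the matrix inverse, but one must be careful that it is concavity/monotonicity in the L\"owner sense that is needed (that is what drives the matrix Jensen step) and that the argument covers singular $A$. The other subtlety worth stating explicitly is the direction of the inequalities: the detour through the information matrix is essential, since applying Jensen to the concave operator acting on $P_{k-1}$ directly would produce an upper bound on $\mathbb E P_k$ instead of the claimed lower bound.
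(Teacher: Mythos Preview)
Your proof is correct and is essentially the same as the paper's: both pass to information matrices, prove $\mathbb E Z_k \leq L_k^{-1}$ by induction using the concavity and monotonicity of $h(X)=\phi(X)=(AX^{-1}A'+Q)^{-1}$ together with Jensen, and finish with the operator convexity of the inverse. The only difference is that you supply a Woodbury-based justification of the concavity/monotonicity of $\phi$, whereas the paper simply cites its auxiliary proposition for that fact.
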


\noindent To further improve the legibility, let us define the function 
\begin{equation}
  L(X,\mathbf p) \triangleq \left[(AXA' + Q)^{-1} + \sum_{i=1}^m p_{i} \frac{C_iC_i'}{r_i}\right]^{-1}, 
\end{equation}
where $X \in \mathbb R^{n\times n}$ is positive semidefinite and $\mathbf p = [p_1,\ldots,p_m]'\in \mathbb R^m$. Moreover, let us define,
\begin{equation}
 L^{(1)}(X,\mathbf p) = L(X,\mathbf p), L^{(k)} (X,\mathbf p) = L(L^{(k-1)}(X,\mathbf p),\mathbf p),
\end{equation}
with 
\begin{equation}
  L^{\infty}(X,\mathbf p) = \lim_{k\rightarrow\infty} L^{(k)}(X,\mathbf p),
  \label{eq:asymptoticlowerbound} 
\end{equation}
when the limit exists. Hence \eqref{eq:lowerbound1} can be simplified as 
\begin{equation}
  L_k = L(L_{k-1}, \mathbf p_k). 
  \label{eq:lowerbound3}
\end{equation}
By replacing the objective function in Problem~\ref{randoptasymptotic} with its lower bound, we obtain the following:
\begin{prob}[Asymptotic Lower Bound for Random Transmission Tree Selection]\label{randoptasymptoticlower}
\begin{align*}
  &\mathop{\textrm{minimize}}\limits_{\pi_{T},\,\mathbf p}&
  &trace(L^\infty(\Sigma,\mathbf p)) \\
  &\textrm{subject to}&
  &\sum_{T\in \mathcal T} \pi_{T}\mathcal E(T)\leq\mathcal E_d, \\
  &&&\pi_{T}\geq 0,\,\sum_{T\in\mathcal T} \pi_{T} = 1,\,p_{i} = \sum_{s_i \in V_T} \pi_{T}.
\end{align*}
\label{randoptlower}
\end{prob}

There are drawbacks of the above formulation: 1) the optimization problem still has a number of constraints and variables depending on $|\mathcal T|$, a number which is not, in the general case, polynomial with respect to $m$; 2) $L^\infty$ is still not explicity. Let us first drop the dependence on $\pi_{T}$. To this end, define the set of feasible $\mathbf p$ for Problem \ref{randoptlower}:
\begin{displaymath}
  \begin{split}
  &\mathcal P \triangleq \left\{\mathbf p\left|\exists \pi,\, \sum_{T\in \mathcal T} \pi_{T}\mathcal E(T)\leq\mathcal E_d,\, \pi_{T}\geq 0,\,\sum_{T\in\mathcal T} \pi_{T} = 1,\, p_{i} = \sum_{s_i \in V_T} \pi_{T}\right.\right\}.
  \end{split}
\end{displaymath}
The following results can be easily proved:

\begin{proposition}
  The energy cost of a given collection of tree selection probabilities $\pi_{k,T}, \forall T \in {\mathcal{T}}$ is a linear function of the resulting marginal probability: 
  \begin{equation}
    \sum_{T \in \mathcal T} \pi_{T} \mathcal E(T)= \sum_{i = 1}^m c_i p_{i}. 
  \end{equation}
  \label{proposition:energy}
\end{proposition}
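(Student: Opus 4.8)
The plan is to reduce the claimed identity to a simple counting argument exploiting the tree structure of $G$. The key structural fact I would establish first is that, for any transmission tree $T = \{V_T, E_T\} \in \mathcal T$, the edge set $E_T$ is in one-to-one correspondence with the non-root vertices of $V_T$. Indeed, since $G$ is a directed tree rooted at $s_0$ and every node $s_i$ with $i \geq 1$ has exactly one outgoing edge (toward its parent on the unique path to $s_0$), and since $T$ being a subtree containing $s_0$ forces the entire path from each of its nodes to $s_0$ to lie in $T$, each $s_i \in V_T$ with $s_i \neq s_0$ contributes precisely its unique outgoing edge to $E_T$, and conversely every edge of $E_T$ arises in this way from exactly one such vertex. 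Using the abbreviation $c_i = c(e_{i,j})$ for that unique outgoing edge of $s_i$, this yields
\[
  \mathcal E(T) = \sum_{e \in E_T} c(e) = \sum_{i=1}^m c_i \, \mathbf{1}\{s_i \in V_T\}.
\]

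With this in hand, I would substitute into the left-hand side of the claim, interchange the two finite sums, and recognize the marginal probabilities:
\[
  \sum_{T \in \mathcal T} \pi_{T} \mathcal E(T)
  = \sum_{T \in \mathcal T} \pi_{T} \sum_{i=1}^m c_i \, \mathbf{1}\{s_i \in V_T\}
  = \sum_{i=1}^m c_i \sum_{T \in \mathcal T,\, s_i \in V_T} \pi_{T}
  = \sum_{i=1}^m c_i p_{i},
\]
where the last equality is simply the definition of $p_i$. The same computation applies verbatim to a time-varying collection $\pi_{k,T}$, since the time index plays no role in the identity.

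As for difficulty: there is essentially no analytic obstacle here; the only point requiring care is the first (structural) step, namely justifying that a subtree of $G$ containing the root has exactly one edge per non-root vertex. This is immediate from the defining properties of a directed tree together with the requirement that, for every selected sensor, the communication path to $s_0$ is included in the transmission tree; once that is spelled out, the remainder is a two-line exchange-of-summation argument. I would therefore state the structural observation carefully and keep the algebraic manipulation terse.
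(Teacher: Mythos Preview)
Your argument is correct. The paper does not actually supply a proof of this proposition; it simply asserts that it ``can be easily proved'' and moves on, so there is nothing to compare against beyond noting that your exchange-of-summation argument, built on the observation that in a rooted subtree each non-root vertex contributes exactly its unique outgoing edge, is precisely the natural justification the authors have in mind.
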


\begin{proposition}
  \label{proposition:existence}
  If $p_{i}\in [0,1]$ and if it satisfies 
  \begin{equation}
    p_{i} \leq  p_{j},\qquad \textrm{if $j$ is a parent of $i$}
  \label{eq:descritpion1} 
  \end{equation}
  then there exists at least one collection of tree selection probabilities $\pi$, such that 
  \begin{equation}
  \pi_{T}\geq 0,\; \sum_{T\in\mathcal T} \pi_{T} = 1,\; p_{i} = \sum_{s_i \in V_T} \pi_{T}.
  \label{eq:descritpion2} 
  \end{equation}
  Conversely, if there exists  $\pi_k$ such that \eqref{eq:descritpion2} holds, then  $p_{k,i}\in [0,1]$ and satisfies \eqref{eq:descritpion1} .
\end{proposition}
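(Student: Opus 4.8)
The plan is to treat the two directions separately. The converse implication is essentially bookkeeping, while the existence direction I would prove by an explicit top-down (root-to-leaf) random sampling construction that realizes the prescribed marginals.

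For the converse, suppose $\pi$ satisfies \eqref{eq:descritpion2}. Each $p_i$ is then a sum of a subcollection of the nonnegative numbers $\pi_T$ whose total equals $1$, so $p_i\in[0,1]$ is immediate. For the monotonicity, recall that every transmission topology $T\in\mathcal T$ is a subtree containing $s_0$; hence if $s_i\in V_T$ with $s_i\neq s_0$, the (unique) parent $s_j$ of $s_i$ must also lie in $V_T$, for otherwise $s_i$ would have no communication path to $s_0$ inside $T$. Consequently $\{T\in\mathcal T:\,s_i\in V_T\}\subseteq\{T\in\mathcal T:\,s_j\in V_T\}$, and summing $\pi_T\ge 0$ over these two sets gives $p_i\le p_j$, i.e. \eqref{eq:descritpion1} (and, by transitivity, $p_i\le p_j$ for every ancestor $s_j$ of $s_i$).

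For the existence direction, I would set $p_0\triangleq 1$ (the fusion center is always selected) and, writing $j(i)$ for the parent of $s_i$, define the conditional inclusion probability $q_i\triangleq p_i/p_{j(i)}$ when $p_{j(i)}>0$ and $q_i\triangleq 0$ otherwise. The hypotheses $p_i\ge 0$ and \eqref{eq:descritpion1} ensure $q_i\in[0,1]$. Then I would construct a random vertex set $V_T$ by: placing $s_0\in V_T$; processing the remaining nodes in any order in which each node follows its parent; and, for node $s_i$, including $s_i$ in $V_T$ with probability $q_i$ if $s_{j(i)}$ has already been included, and excluding $s_i$ otherwise — all coin flips independent. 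By construction $V_T$ is closed under taking parents and contains $s_0$, so $T=(V_T,E_T)$ with $E_T=\{(i,j(i)):s_i\in V_T,\ s_i\neq s_0\}$ is always a valid element of $\mathcal T$. This procedure induces a distribution on $\mathcal T$, and reading off $\pi_T$ as the product of the corresponding $q_i$ and $1-q_i$ factors gives $\pi_T\ge 0$ and $\sum_T\pi_T=1$ for free. Finally, by independence and a telescoping product along the unique path from $s_0$ to $s_i$,
\[
  \sum_{T:\,s_i\in V_T}\pi_T \;=\; P(s_i\in V_T)\;=\;\prod_{s_\ell\ \text{on that path},\ \ell\neq 0} q_\ell \;=\;\prod \frac{p_\ell}{p_{j(\ell)}}\;=\;\frac{p_i}{p_0}\;=\;p_i ,
\]
so the marginal constraints in \eqref{eq:descritpion2} hold.

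The only delicate point is the degenerate case $p_{j(i)}=0$: there $p_i/p_{j(i)}$ is undefined, but \eqref{eq:descritpion1} forces $p_i=0$ as well, so the choice $q_i=0$ is consistent and the telescoping identity above still yields $0=p_i$ for $s_i$ and all its descendants. I do not expect any real obstacle beyond keeping this case explicit and verifying that the constructed set is always a genuine subtree. I would also remark, as an alternative non-constructive route, that \eqref{eq:descritpion1} describes exactly the order polytope of the tree poset (ancestor order, with $s_0$ as the top element), whose extreme points are the indicators of up-sets — which are precisely the vertex sets of the subtrees in $\mathcal T$ — so the claim is also an instance of the vertex description of order polytopes.
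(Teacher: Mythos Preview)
Your proof is correct in both directions. The paper itself does not give a formal proof of this proposition (it simply asserts the result ``can be easily proved''), but it does supply an explicit construction in the subsequent Remark and again in the Implementation subsection: sort the marginals as $p_{i_1}\ge p_{i_2}\ge\cdots\ge p_{i_m}$, take the nested subtrees $T_j=\{s_0,s_{i_1},\ldots,s_{i_j}\}$, and set $\pi_{T_0}=1-p_{i_1}$, $\pi_{T_j}=p_{i_j}-p_{i_{j+1}}$, $\pi_{T_m}=p_{i_m}$. Equivalently, one draws a single uniform $\alpha\in[0,1]$ and includes $s_i$ iff $\alpha\le p_i$; the parent-monotonicity \eqref{eq:descritpion1} guarantees the resulting vertex set is always a valid subtree. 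This is precisely the order-polytope vertex decomposition you mention as an alternative at the end.

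So your primary route --- conditional top-down sampling with inclusion probabilities $q_i=p_i/p_{j(i)}$ and a telescoping product for the marginals --- is genuinely different from the paper's threshold construction, though both realize the same marginals. The paper's version has the advantage of a support of size at most $m+1$ and, more importantly, of requiring only one shared random number $\alpha_k$; this is exactly what makes the zero-overhead distributed implementation of Section~III.B work, since all nodes agree on the selected tree without coordination. Your construction, by contrast, uses independent local coin flips and in general puts mass on exponentially many subtrees; it yields a slightly cleaner probabilistic verification of the marginals but would not directly support the single-seed implementation argument. Your handling of the degenerate case $p_{j(i)}=0$ is fine.
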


\noindent By exploiting the above Propositions we can reformulate the feasible set $\mathcal P$ as follows 
\begin{equation}
  \mathcal P =\left\{\mathbf p\left|p_i\in[0,\,1],\,\sum_{i = 1}^m c_i p_{i}\leq \mathcal E_d,\,  p_{i} \leq  p_{j},\textrm{if $j$ is parent of $i$}\right. \right\},
  \label{eq:treecondition} 
\end{equation}
and we can rewrite Problem~\ref{randoptasymptoticlower} as
\begin{prob}[Asymptotic Lower Bound for Random Transmission Tree Selection]\label{randoptasymptoticlowerfinal}
\begin{align*}
  &\mathop{\textrm{mininize}}\limits_{\mathbf p \in \mathbb R^m}&
  &trace(L^\infty(\Sigma,\mathbf p)) \\
  &\textrm{subject to}&
  &\mathbf p \in \mathcal P.
\end{align*}
\end{prob}
Now the main difficulty to solve the above problem is that $L^\infty(X,\mathbf p)$ is in general not convex in $\mathbf p$. Moreover, the exact form of $L^\infty(X,\mathbf p)$ is unknown. To overcome those limitations, we propose the following algorithm:
\begin{enumerate}
  \item Define
     $ \mathbf p_0 =  \left({\mathcal E_d}/{\left(\sum_{i=1}^m c_i\right)}\right) \mathbf 1_m,
    $
    where $\mathbf 1_m \in \mathbb R^m$ is a vector with all one entries and choose the matrix
    $  L_0 = L^{\infty}(I_n,\mathbf p_0).  $ 
  \item Let $L_k$ and $\mathbf p_k$ be the solution of the following optimization problem
\begin{prob}[Random Sensor Selection with Descend Constraint]\label{greedylower}
\begin{align*}
  &\mathop{\textrm{minimize}}\limits_{\mathbf p_k \in \mathbb R^m}&
  &trace(L_k)(=trace(L(L_{k-1},\mathbf p_k)))\\
  &\textrm{subject to}&
  & L_k \leq L_{k-1},\,\mathbf p_k \in \mathcal P.
\end{align*}
\end{prob} 
  \item Choose $\mathbf p^*$ as an accumulation point of $\mathbf p_k$\footnote{An accumulation point of a sequence is the limit of a converging subsequence}. Then $L^{\infty}(X, \mathbf p^*) = \lim_{k\rightarrow \infty} L_k$ for any $X\geq 0$. 
\end{enumerate}
Before proving the feasibility of the above algorithm, we want to point out that our algorithm is greedy. In fact, we try to minimize the lower bound for the next step in the hope of reducing the final asymptotic lower bound. As a result, it is suboptimal by nature. The following theorem gives a characterization of the main features of the proposed algorithm.

\begin{theorem}
  \label{theorem:lfunction}
$L(X,\mathbf p)$ is convex with respect to $\mathbf p$ and it is concave and monotonically increasing with respect to $X$.
\end{theorem}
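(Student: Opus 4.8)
The plan is to handle the two assertions by separate, standard arguments. Throughout I would use the shorthand $S(X)\triangleq(AXA'+Q)^{-1}$, which is well defined and positive definite for every $X\succeq 0$ since $Q>0$, and $D(\mathbf p)\triangleq\sum_{i=1}^m p_i C_iC_i'/r_i$, which is positive semidefinite for every $\mathbf p$ in the feasible region, in particular for $\mathbf p\in\mathcal P$. Then $L(X,\mathbf p)=\big(S(X)+D(\mathbf p)\big)^{-1}$, and the two claims decouple cleanly in $X$ and in $\mathbf p$.

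\emph{Convexity in $\mathbf p$.} First I would fix $X$ and observe that $\mathbf p\mapsto S(X)+D(\mathbf p)$ is affine and takes values in the positive definite cone. Since the matrix inverse $Z\mapsto Z^{-1}$ is operator convex on positive definite matrices and operator convexity is preserved under precomposition with an affine map, $\mathbf p\mapsto L(X,\mathbf p)$ is convex in the L\"owner order; taking traces (the trace being linear and L\"owner-monotone) then shows $trace\,L(X,\mathbf p)$ is convex, which is exactly what Problems~\ref{randoptasymptoticlowerfinal} and~\ref{greedylower} require. If one prefers not to invoke operator convexity as a black box, the same conclusion follows from the variational identity $Z^{-1}=\sup_{W=W'}\big(2W-WZW\big)$, valid for $Z>0$ with the supremum attained at $W=Z^{-1}$: for each fixed $W$ the inner expression is affine in $\mathbf p$, and a pointwise supremum of affine maps is convex.

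\emph{Monotonicity and concavity in $X$.} Here I would first peel off the affine inner map: $X\mapsto Y\triangleq AXA'+Q$ is affine and order-preserving ($X_1\preceq X_2\Rightarrow AX_1A'\preceq AX_2A'$), so it suffices to prove that $h(Y)\triangleq(Y^{-1}+D)^{-1}$ is concave and monotonically increasing on $\{Y>0\}$, after which $L(\cdot,\mathbf p)=h(A\cdot A'+Q)$ inherits both properties, a concave increasing function composed with an affine increasing map being concave and increasing. Monotonicity of $h$ is immediate from the order-reversing property of inversion: $Y_1\preceq Y_2\Rightarrow Y_1^{-1}\succeq Y_2^{-1}\Rightarrow Y_1^{-1}+D\succeq Y_2^{-1}+D\Rightarrow h(Y_1)\preceq h(Y_2)$. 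For concavity I would factor $D=\Gamma'\Gamma$ (e.g.\ $\Gamma=D^{1/2}$) and use the completion-of-squares identity underlying the Kalman measurement update,
\begin{displaymath}
  h(Y)=(Y^{-1}+\Gamma'\Gamma)^{-1}=\min_{K}\;(I-K\Gamma)\,Y\,(I-K\Gamma)'+KK',
\end{displaymath}
the minimum over $K$ (in the L\"owner order) being attained at $K^\star=Y\Gamma'(\Gamma Y\Gamma'+I)^{-1}$. For each fixed $K$ the right-hand side is affine and order-preserving in $Y$; since a pointwise infimum of affine maps is concave and a pointwise infimum of increasing maps is increasing, $h$ is concave and increasing, which completes the argument.

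\emph{Where the work is.} The convexity-in-$\mathbf p$ and monotonicity-in-$X$ parts are essentially bookkeeping with the L\"owner order. The only genuinely delicate point is concavity in $X$: because the outermost operation is itself a matrix inversion, one cannot simply chain ``inverse is convex'' twice — the composition is in fact concave here — so the argument must go through the variational (Kalman-gain) representation above, or equivalently through the joint concavity of the parallel sum $Y:Z=(Y^{-1}+Z^{-1})^{-1}$ specialized to $Z=D^{-1}$ together with a limiting argument to cover singular $D$. Making that representation precise — verifying the completion of squares and the matrix-inversion-lemma identity $Y-Y\Gamma'(\Gamma Y\Gamma'+I)^{-1}\Gamma Y=(Y^{-1}+\Gamma'\Gamma)^{-1}$ — is the main step to get right.
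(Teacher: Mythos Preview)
Your proof is correct. For convexity in $\mathbf p$ you follow exactly the paper's route: write $L(X,\mathbf p)=f\big(S(X)+D(\mathbf p)\big)$ with $f(Z)=Z^{-1}$ operator convex and the inner map affine in $\mathbf p$. For the $X$-part the paper simply observes that, with $\mathbf p$ fixed, $L(\cdot,\mathbf p)$ has the same structure as the map $h(X)=(AX^{-1}A'+Q)^{-1}$ of Proposition~\ref{proposition:convexity} and invokes that proposition, which it states without proof. Your argument is more explicit: you peel off the affine map $X\mapsto AXA'+Q$ and then establish concavity of $Y\mapsto(Y^{-1}+D)^{-1}$ via the Kalman-gain variational identity $\min_K\,(I-K\Gamma)Y(I-K\Gamma)'+KK'$, which exhibits the function as a L\"owner infimum of affine maps. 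This is a genuinely different (and self-contained) justification of the concavity step that the paper leaves to Proposition~\ref{proposition:convexity}; both approaches are standard, but yours has the advantage of supplying the missing proof rather than citing it.
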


\noindent  Due to the convexity of $L$ and $\mathcal P$, Problem~\ref{greedylower} is a convex optimization problem with $O(m)$ optimization variables and  $O(m)$ constraints. Thus, it can be solved efficiently. For example, if interior-points methods is used, then the complexity is $O(m^3)$. For detailed discussions about the computational burdens, please refer to \cite{Joshi}.

\begin{theorem}
The following statements are true for the proposed algorithm:
\begin{enumerate}
  \item $L_0$ exists. 
  \item Problem~\ref{greedylower} is always feasible. 
  \item $\mathbf p^*$ exists and $\mathbf p^*\in\mathcal P$. 
\item $L_\infty = \lim_{k\rightarrow \infty}L_k$ exists. 
\item $L_\infty = L^{\infty}(X,\mathbf p^*)$ for all positive semidefinite $X$. 
\end{enumerate}
\label{theorem:algorithm}
\end{theorem}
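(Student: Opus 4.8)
The plan is to prove the five claims in the stated order: items 1--4 are short, resting on monotonicity of $L$ in its first argument (Theorem~\ref{theorem:lfunction}), continuity of $L$, and compactness of $\mathcal P$, whereas item 5 is the substantive part and appeals to the classical convergence theory of the Riccati recursion. Throughout, assume without loss of generality that $0<\mathcal E_d\le\sum_i c_i$ (otherwise the energy constraint is vacuous). For item 1: every entry of $\mathbf p_0$ equals $\mathcal E_d/\sum_i c_i>0$, so the information matrix $M_0\triangleq\sum_i p_{0,i}C_iC_i'/r_i=(\mathcal E_d/\sum_i c_i)\,C'R^{-1}C$ satisfies $\ker M_0=\ker C$; observability of $(C,A)$ therefore makes $(M_0^{1/2},A)$ detectable, and $(A,Q^{1/2})$ is controllable since $Q>0$. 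By the standard convergence theorem for the Kalman-filter Riccati iteration, $L^{(k)}(I_n,\mathbf p_0)$ converges to the unique stabilizing, positive-definite solution, so $L_0=L^{\infty}(I_n,\mathbf p_0)$ is well defined.

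For item 2 I would induct on $k$, carrying the hypothesis that $L_k$ is well defined and $L_k\le L_{k-1}$. At $k=1$: $\mathbf p_0\in\mathcal P$, and letting $j\to\infty$ in $L(L^{(j)}(I_n,\mathbf p_0),\mathbf p_0)=L^{(j+1)}(I_n,\mathbf p_0)$ (continuity plus item 1) gives $L(L_0,\mathbf p_0)=L_0$, so $\mathbf p_0$ is feasible for Problem~\ref{greedylower}; the feasible set $\{\mathbf p\in\mathcal P:L(L_0,\mathbf p)\le L_0\}$ is a nonempty closed (hence compact) subset of $\mathcal P$ and $\mathbf p\mapsto trace(L(L_0,\mathbf p))$ is continuous, so a minimizer $\mathbf p_1$ exists and $L_1\le L_0$ because $\mathbf p_1$ meets that constraint. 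For $k-1\to k$: $\mathbf p_{k-1}\in\mathcal P$, and monotonicity of $L$ in its first argument together with $L_{k-1}\le L_{k-2}$ gives $L(L_{k-1},\mathbf p_{k-1})\le L(L_{k-2},\mathbf p_{k-1})=L_{k-1}$, so $\mathbf p_{k-1}$ is feasible at step $k$; the same compactness/continuity argument then produces $\mathbf p_k$ with $L_k\le L_{k-1}$.

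Items 3 and 4 are now immediate. $\mathcal P$ is compact --- bounded because $p_i\in[0,1]$, closed because it is cut out by non-strict linear inequalities --- and $\{\mathbf p_k\}\subseteq\mathcal P$, so by Bolzano--Weierstrass $\{\mathbf p_k\}$ has a convergent subsequence whose limit $\mathbf p^*$ lies in $\mathcal P$, giving item 3. For item 4, $\{L_k\}$ is non-increasing in the Loewner order (item 2) and bounded below by $0$, so $v'L_kv$ is a convergent scalar sequence for every $v$; polarization then shows every entry of $L_k$ converges, so $L_\infty\triangleq\lim_k L_k$ exists and is positive semidefinite.

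Item 5 is the heart of the argument. Fix a subsequence with $\mathbf p_{k_j}\to\mathbf p^*$; since $L$ is continuous on $\{X\ge0\}\times\mathcal P$ and $L_{k_j}\to L_\infty$, $L_{k_j-1}\to L_\infty$, passing to the limit in $L_{k_j}=L(L_{k_j-1},\mathbf p_{k_j})$ gives $L_\infty=L(L_\infty,\mathbf p^*)$, and $L_\infty^{-1}=(AL_\infty A'+Q)^{-1}+\sum_i p_i^*C_iC_i'/r_i>0$ because $Q>0$, so $L_\infty$ is a positive-definite fixed point of the Riccati operator $L(\cdot,\mathbf p^*)$. The existence of a bounded positive-definite fixed point forces $((\sum_i p_i^*C_iC_i'/r_i)^{1/2},A)$ to be detectable --- otherwise the iteration would diverge along an undetectable unstable mode, which is incompatible with a finite fixed point --- and, combined with controllability of $(A,Q^{1/2})$, the classical Riccati convergence theorem then yields that $L(\cdot,\mathbf p^*)$ has a \emph{unique} positive-semidefinite fixed point and that $L^{(k)}(X,\mathbf p^*)$ converges to it from \emph{every} positive-semidefinite $X$. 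As $L_\infty$ is such a fixed point, $L^{\infty}(X,\mathbf p^*)=L_\infty=\lim_k L_k$ for all $X\ge0$, which is item 5. I expect this last step to be the main obstacle: one must invoke (or reprove) the appropriate global-convergence theorem for the time-invariant Riccati equation, the delicate point being to deduce detectability of the observation pair from the mere finiteness of $L_\infty$; the remaining technicalities --- well-definedness and continuity of $L$ on positive-semidefinite arguments, and positive-definiteness of $L_\infty$ --- all follow from $Q>0$.
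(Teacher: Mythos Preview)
Your proposal is correct and follows essentially the same route as the paper: items 1--4 via detectability of $(C_{\mathbf p_0},A)$, the inductive feasibility argument using $\mathbf p_{k-1}$ and monotonicity of $L$, Bolzano--Weierstrass on the compact set $\mathcal P$, and monotone convergence of $\{L_k\}$; item 5 by passing to the limit along a subsequence to obtain the fixed-point equation $L_\infty=L(L_\infty,\mathbf p^*)$ and then invoking uniqueness/global attractivity of the Riccati fixed point. The only stylistic difference is that the paper packages the last step into a separate lemma which deduces detectability (and hence global convergence) from the existence of a finite fixed point via the \emph{concavity} of $L$ in $X$---showing $L(\alpha X_0,\mathbf p)<\alpha X_0$ for $\alpha>1$ and thus bounding all iterates---whereas you invoke the standard ``undetectable unstable mode forces divergence'' argument; both are valid and equivalent in effect.
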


\begin{proof}
  \begin{enumerate}
    \item The proof is reported in the Appendix.
    \item Suppose that the Problem~\ref{greedylower} is feasible up to time $k$. To prove the problem is also feasible at time $k+1$, we only need to find one $\mathbf p\in \mathcal P$ and $L(L_k,\mathbf p)\leq L_k$. If we choose $\mathbf p = \mathbf p_k$ then, becasue $\mathbf p_k$ is the solution at time $k$, it follows that $\mathbf p_k\in\mathcal P$. It remains to prove that $L(L_k,\mathbf p_k)\leq L_k$, which can be proved by noticing that $L_k = L(L_{k-1},\mathbf p_k)  \leq L_{k-1}$ and $L(X,\mathbf p)$ is monotonically increasing with respect to $X$.
      Similarly, Problem~\ref{greedylower} is also feasible at time $1$ and then, by induction, Problem~\ref{greedylower} is always feasible. 
\item It is easy to see that $\mathbf p_k$ is bounded because $p_{k,i}\in [0,1]$. By means of the Bolzano-Weierstrass Theorem, this implies that there always exists an accumulation point $\mathbf p^*$. Moreover, because $\mathbf p_k\in\mathcal P$ and $\mathcal P$ is closed, $\mathbf p^*\in \mathcal P$. 
\item Because $\{L_k\}$ is decreasing and $L_k \geq 0$ for all $k$, the limit must exist. 
    \item The proof is reported in the Appendix.
  \end{enumerate}
\end{proof}

\begin{remark}
	It is worth noticing that in general it may exist more than one set of $\pi_{T}, \forall T \in \mathcal{T} $ with the same marginal probabilities. One possible way to determine $\pi_{T}$ is as follows:
	\begin{enumerate}
		\item Sort the marginal probability $p_{i}$, suppose that $p_{i_1}\geq p_{i_2}\geq\ldots\geq p_{i_m}$.
		\item Define $T_0 = \{s_0\}$, $T_j=T_{j-1}\bigcup \{i_j\}$.
		\item Choose $\pi_{T_0} = 1-p_{i_1}$, $\pi_{T_1}=p_{i_1}-p_{i_2},\pi_{T_2}=p_{i_2}-p_{i_3},\ldots,\pi_{T_m}=p_{i_m}$.
	\end{enumerate}
	One can easily verify that $T_i\in \mathcal T$ and $\pi_{T}$ are compatible with the marginal probability.
\end{remark}

\subsection{Implementation}
In this subsection we discuss a possible implementation of our sensor selection algorithm. We assume that a fixed random schedule $\mathbf p$ is used. Since the optimization does not depend on the real-time sensor measurement $y_k$, the optimization step is performed off-line in a centralized fashion. Each sensor $i$ stores its optimal $p_{i}$ and $p_{j}$ of all its children.

At each time $k$, we have to select one subset of sensors according to the marginal probabilities $\mathbf p$. However, we do not want the fusion center to query the nodes because this would increase the communication overhead, defying the purpose of sensor selection. To overcome this problem, we propose the following algorithm:
\begin{enumerate}
\item Every sensor is equipped with the same random number generator and the same seed. 
\item At time $k$, each sensor draws a random number $\alpha_k$ from the random number generator. 
\item If sensor $i$ has no children, then it compares $\alpha_k$ with $p_{i}$. If $\alpha_k \leq p_{i}$, then it transmits the measurement to its parent. Otherwise, it does not transmit anything. 
\item If sensor $i$ has children, then it compares $\alpha_k$ with $p_{j}$, where $j$ is the index of its child node. If $\alpha_k \leq p_{j}$, then sensor $i$ knows that child $j$ will forward an observation packet to him. After the node $i$ receives all the observation packets from its children, it merges all packets and its own observations into a single packet and forwards it to its parent. If $\alpha_k > p_{j}$ for all $j$ child of $i$, then the node $i$ compares $\alpha_k$ with $p_{i}$. If $\alpha_k \leq p_{i}$, then sensor $i$ transmits its measurements to its parent. Otherwise, it does not transmit anything. 
\end{enumerate}
\noindent Because all sensors are equipped with the same random number generator and the same seed, every sensor gets the same $\alpha_k$ at time $k$. Hence, the above algorithm guarantees that all sensors agree on the same transmission topology $T$ which satisfies the marginal distribution $\mathbf p$. It is worth to remark that in such a scheme the only communication needed is the transmission of the observation packets and no communication overhead for coordination purposes is needed.
\begin{remark}
It is worth mentioning that since all the sensors agree on the same $\alpha_k$, it is very easy to implement a Time Division Multiple Access (TDMA) protocol to avoid wireless interference. 	
\end{remark}

\section{Simulation Result}\label{sec:simulation}
In order to show the effectiveness of the proposed method we apply our stochastic sensor selection algorithm to a numerical example in which a sensor network is deployed to monitor a diffusion process in a $l\times l$ planar closed region, whose model is given by 
\begin{equation}
  u_t = \alpha \triangledown^2 u.
	\label{eq:diffussionpdfcont}
\end{equation}
where $\triangledown^2$ is the Laplace operator. $u(t,x_1,x_2)$ denotes the temperature at time $t$ at location $(x_1,x_2)$ and $\alpha$ indicates the speed of the diffusion process.

We use the finite difference method to discretize this model by dividing the region into $1m\times 1m$ grids and time into $1s$ slot. If we group all temperature values at time $k$ in the vector $U_k = [u(k,0,0),\ldots,u(k,0,N-1),u(k,1,0),\ldots,u(k,N-1,N-1)]^T$, we can write the evolution of the discretized system as $U_{k+1} = A U_{k}$, where the $A$ matrix can be computed from discretization. If we introduce process noise, $U_k$ will evolve according to $U_{k+1} = A U_{k} + w_k,$ where $w_k \in \mathcal N(0,\;Q)$ is the process noise.

We suppose that the fusion center is located in the bottom left corner at position $(0,0)$. We assume that $m$ sensors are randomly distributed in the region and each sensor measures a linear combination of temperature of the grid around it\footnote{We do not require the sensors to be placed at grid points}. In particular, if we suppose the location of sensor $l$ of coordinates  $(a_1,a_2)$ is in the cell $[i,j]$, i.e. $a_1 \in [i,i+1)$ and $a_2 \in [j,j+1)$, the measurement of this sensor is
\begin{displaymath}
	\begin{split}
	 &y_{k,l} = \left[\right.(1-\Delta a_1)(1-\Delta a_2) u(k,i,j)+  \Delta a_1(1-\Delta a_2) u(k,i+1,j)+\\
	 &(1-\Delta a_1)\Delta a_2 u(k,i,j+1)+  \Delta a_1\Delta a_2 u(k,i+1,j+1)\left.\right]/h^2+ v_{k,l}  .
	\end{split}
\end{displaymath}
where $\Delta a_1 =a_1-i$, $\Delta a_2 =a_2-j$ and $v_{k,l}$ is the measurement noise of sensor $l$ at time $k$. Indicating with $Y_k$ the vector of all the measurements at time $k$, it follows that: $Y_k =  C U_k + v_k,$ where $v_k$ denotes the measurement noise at time $k$ assumed to have normal distribution $\mathcal N(0,\;R)$ and $C $ is the observation matrix. Finally, we assume that the sensor network admits a minimum spanning tree topology with communication cost from sensor $i$ to $j$ is 
\[
cost(e_{i,j}) = c+d_{i,j}^2 
\]
where $d_{ij}$ is the Euclidean distance from sensor $i$ to sensor $j$ and $c$ is a constant related to the sensing energy consumption\footnote{$c$ models the fact that as the distance goes to zero the communication cost does not}. For the simulations, we impose the following parameters: $l=3\; m$, $m = 16$, $\alpha = 0.1 \; m^2/s$, $Q = I =R = I \in \mathbb R^{16\times16}$, $\Sigma = 4I \in \mathbb R^{16\times 16}$, $\mathcal E_d = 6$,$c=1$. 


We compare the performance of the optimal fixed stochastic schedule with optimal fixed deterministic schedule found by exhaustive search. Figure~\ref{fig:fixed} shows the histogram of the ratio between $trace(P_\infty)$ of deterministic schedule and $trace(EP_\infty)$ of stochastic schedule, which is generated by 100 random experiments. The blue dashed line is the average ratio. It can be seen that the deterministic schedule is always worse than the stochastic one. Figure~\ref{fig:fixed} shows the trace of $P_k$ for the optimal deterministic fixed schedule, together with the trace of $P_k$ from a sample path of the stochastic fixed schedule and the $EP_k$ of the stochastic fixed schedule for one random experiment.

\begin{figure}[<+htpb+>]
  \centering
  \begin{minipage}[t]{0.4\textwidth}
    \begin{center}
      \includegraphics[width=0.9\textwidth]{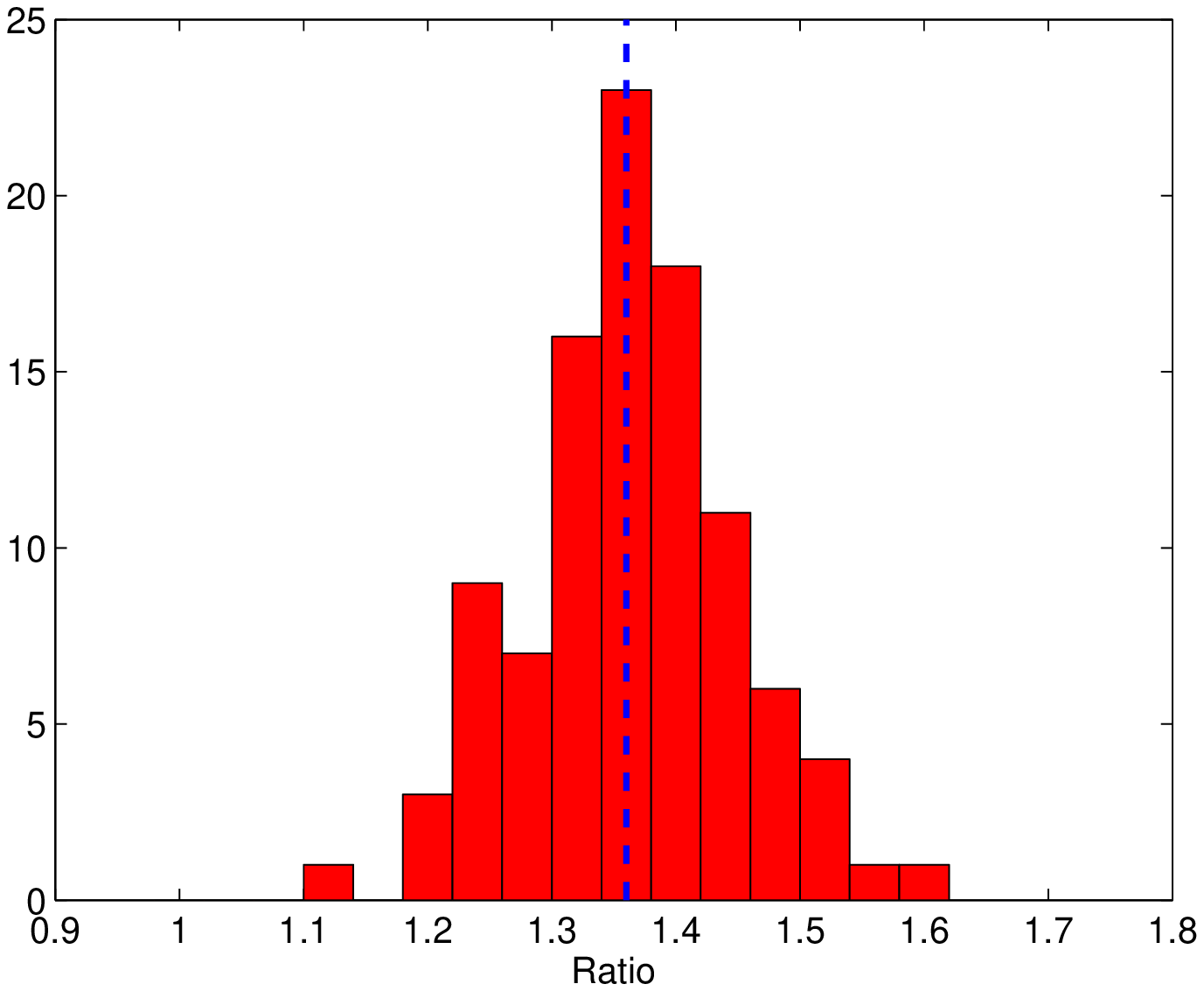}
      \caption{Histogram of the ratio between $trace(P_\infty)$ of deterministic schedule and $trace(EP_\infty)$ of stochastic schedule}
      \label{fig:fixed}
    \end{center}
  \end{minipage}
  \begin{minipage}[t]{0.4\textwidth}
    \begin{center}
      \includegraphics[width=0.9\textwidth]{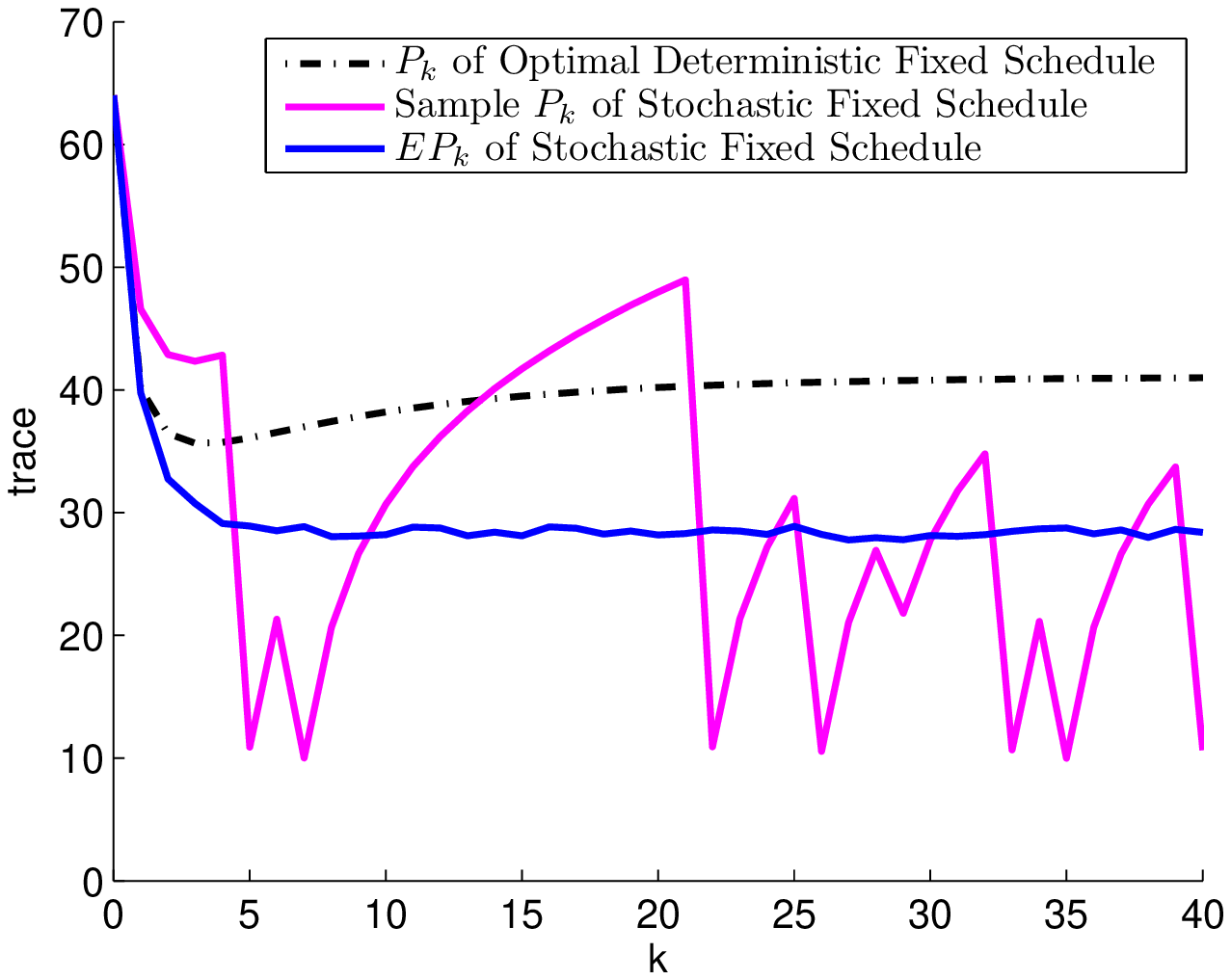}
      \caption{Evolution of $trace(P_k)$}
      \label{fig:ergodic}
    \end{center}
  \end{minipage}
\end{figure}

\section{Conclusions}\label{sec:conclusion}
In this paper, we propose a stochastic sensor selection algorithm for a tree topology wireless sensor network. We solve the optimal stochastic sensor selection problem after relaxation by means of convex optimization.
%
%
%
We also provide a possible implementation of our random sensor selection algorithm without introducing any communication overhead. Finally we discussed extensions to general graphs and to the case of unreliable communications. Examples show interesting results regarding the effectiveness of the proposed approach.

\bibliographystyle{IEEEtran}
\bibliography{SC_Letters09-reference}
\section*{Appendix}\label{sec:appendix}

\noindent First, let us state the following Proposition:
\begin{proposition}\label{proposition:convexity}
  Define functions $f(X),h(X)$ to be 
  \begin{align}
    f(X) &= X^{-1},\\
    h(X) &=(AX^{-1}A' + Q)^{-1}.
  \end{align}
where $X \in R^{n\times n}$  is positive definite and $T\in\mathcal T$. Then the following statements hold:
  \begin{enumerate}
    \item $f(X)$ is convex and monotone decreasing; 
    \item $h(X)$ is concave and monotone increasing; 
  \end{enumerate}
\end{proposition}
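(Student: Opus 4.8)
The plan is to prove the two assertions about $f$ directly, then bootstrap them to $h$ using operator monotonicity together with a single algebraic identity. For $f(X) = X^{-1}$ on the cone of positive definite matrices, I would start from the variational formula obtained by completing the square in $(v - X^{-1}u)'X(v - X^{-1}u) \ge 0$, namely, for every $u \in \mathbb R^n$,
\[
  u'X^{-1}u \;=\; \sup_{v\in\mathbb R^n}\,\bigl(2u'v - v'Xv\bigr),
\]
with the supremum attained at $v = X^{-1}u$. For fixed $u,v$ the map $X \mapsto 2u'v - v'Xv$ is affine, so $X \mapsto u'X^{-1}u$ is a pointwise supremum of affine scalar functions and hence convex; since this holds for every $u$, $f$ is convex in the Loewner order. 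Moreover, if $0 < X \le Y$ then $v'Xv \le v'Yv$ for all $v$, so $2u'v - v'Xv \ge 2u'v - v'Yv$, and taking suprema gives $u'X^{-1}u \ge u'Y^{-1}u$ for all $u$, i.e. $f$ is monotone decreasing. (Equivalently one may invoke the standard operator-antitonicity of the inverse.)

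With $f$ in hand, both claims about $h(X) = (AX^{-1}A' + Q)^{-1}$ follow. For monotonicity: if $0 < X \le Y$, then $X^{-1} \ge Y^{-1}$ by the first step, hence $AX^{-1}A' + Q \ge AY^{-1}A' + Q$, and both sides are positive definite since $Q > 0$; applying the decreasing property of $f$ to this pair yields $h(X) \le h(Y)$. For concavity, the key step is the matrix inversion (Woodbury) identity: since $Q > 0$ and $X > 0$ (so that $X + A'Q^{-1}A > 0$),
\[
  h(X) \;=\; (Q + AX^{-1}A')^{-1} \;=\; Q^{-1} - Q^{-1}A\,(X + A'Q^{-1}A)^{-1}A'Q^{-1}.
\]
Here $X \mapsto X + A'Q^{-1}A$ is affine with values in the positive definite cone, $Z \mapsto Z^{-1}$ is convex there by the first step, and the congruence $M \mapsto Q^{-1}AMA'Q^{-1}$ is linear and hence preserves convexity; composing, $X \mapsto Q^{-1}A(X + A'Q^{-1}A)^{-1}A'Q^{-1}$ is convex, so $h$ is a constant matrix minus a convex map, hence concave.

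The three monotonicity/convexity-of-the-inverse claims are routine once the variational formula of the first step is available; the only genuinely non-mechanical point is the concavity of $h$, which is not visibly a composition of monotone convex maps until the Woodbury rewrite exposes it as ``constant minus convex.'' I would also flag that this identity requires only $Q$ and $X + A'Q^{-1}A$ to be invertible, so it does not presuppose invertibility of $A$.
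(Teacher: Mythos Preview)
Your proof is correct. The variational characterization $u'X^{-1}u = \sup_v(2u'v - v'Xv)$ gives both convexity and antitonicity of the inverse cleanly, and your use of the Woodbury identity to rewrite $h$ as a constant minus a convex map is the right move for concavity; the monotonicity of $h$ follows as you say by chaining two applications of operator antitonicity.

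For comparison with the paper: the paper does not actually prove this proposition. It is stated in the appendix as a standing fact and then invoked in the proofs of Theorems~\ref{theorem:lowerbound} and~\ref{theorem:lfunction}. So there is nothing to compare against; you have supplied a complete argument where the paper appeals to a standard result. Your remark that the Woodbury step requires only $Q>0$ and $X+A'Q^{-1}A>0$, and not invertibility of $A$, is a useful observation since the paper elsewhere assumes $A$ invertible but not for this proposition.
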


\begin{proof}[Proof of Theorem~\ref{theorem:lowerbound}]
	By the definition of $L_k$, we know that 
 \begin{align*}
   L^{-1}_k  = L^{-1}_{k|k-1} + \sum_{i=1}^m p_{k,i} \frac{C_i'C_i}{r_i},\, L^{-1}_{k|k-1} =(AL_{k-1}A' + Q)^{-1}.
 \end{align*}

 Let us define $Z_k \triangleq P_k^{-1},\,Z_{k|k-1}\triangleq P_{k|k-1}^{-1}$. We will first prove $L_k^{-1} \geq \mathbb EZ_k$ by induction. When $k=0$, $L_0^{-1} = Z_0 = P_0^{-1}$. Suppose that $L_{k-1}^{-1} \geq \mathbb EZ_{k-1}$, since $P_{k|k-1} = AP_{k-1}A'+Q$, we know that 
  \begin{equation}
      Z_{k|k-1} =(AZ_{k-1}^{-1}A' + Q)^{-1} = h(Z_{k-1}). 
  \end{equation}
  By taking the expectation on both sides, we get 
  \begin{equation}
    \mathbb EZ_{k|k-1} = \mathbb Eh(Z_{k-1}) \leq h(\mathbb EZ_{k-1}) \leq h(L^{-1}_{k-1}) = L^{-1}_{k|k-1}. 
  \end{equation}
  The first inequality is a consequence of the concavity of $h(X)$ and Jensen's inequality. The second inequality is derived from the monotonicity of $h(X)$ and from the fact that $L^{-1}_{k-1} \geq \mathbb EZ_{k-1}$. Now, by \eqref{eq:informationmatrix}, we know that 
  \begin{equation}
    \mathbb EZ_k = \mathbb EZ_{k|k-1} + \sum_{i=1}^m p_{k,i}\frac{C_i'C_i}{r_i} \leq  \mathbb EL^{-1}_{k|k-1} + \sum_{i=1}^m p_{k,i}\frac{C_i'C_i}{r_i} = L^{-1}_k. 
  \end{equation}
  Hence, for all $k$, $L^{-1}_k\geq \mathbb EZ_k$. Now, by the definition of $Z_k$, we know that
 \begin{displaymath}
   P_k = Z_k^{-1} = f(Z_k). 
 \end{displaymath}
 Since $f$ is convex, by Jensen's inequality, the following inequalities result 
 \begin{equation}
   \mathbb EP_k  = \mathbb Ef(Z_k) \geq f(\mathbb EZ_k) = (\mathbb EZ_k)^{-1} \geq L_k. 
 \end{equation}
\end{proof}

\begin{proof}[Proof of Theorem~\ref{theorem:lfunction}]
Fix $X$, 
\[
L(X,\mathbf p) = f\left((AXA^T+Q)^{-1} + \sum_{i=1}^m p_i \frac{C_iC_i'}{r_i}\right). 
\]
Since, $f$ is convex and $(AXA^T+Q)^{-1} + \sum_{i=1}^m p_i C_iC_i/r_i$ is linear with respect to $\mathbf p$, $L$ is convex with respect to $\mathbf p$. Once $\mathbf p$ is fixed, it is easy to see that $L$ is of the same form as $h$. By similar arguments, $L$ is concave and monotone decreasing with respect to $X$.
\end{proof}

Before proving Theorem~\ref{theorem:algorithm}, we need the following lemmas:

\begin{lemma}
  \label{lemma:existence1}
   Consider matrix $C_{\mathbf p} = [\sqrt{p_1}C_1',\ldots, \sqrt{p_m}C_m']'$. If the pair $(C_{\mathbf p}, A)$ is detectable, then the following limit exists for all positive semidefinite matrices $X$: 
  \begin{displaymath}
     L^{\infty}(X,\mathbf p) = \lim_{k\rightarrow\infty} L^{(k)}(X,\mathbf p). 
  \end{displaymath}
  Moreover, if the pair $(A,\,Q^{1/2})$ is controllable, then the above limit is unique regardless of $X$.
\end{lemma}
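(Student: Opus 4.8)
The plan is to recognize that the map $L(\cdot,\mathbf p)$ is exactly one step of a standard Kalman-filter a posteriori Riccati recursion, and then to invoke (and where convenient re-derive) the classical convergence theory of the discrete-time Riccati equation. Fix $\mathbf p$ and set $H_{\mathbf p}\triangleq\sum_{i=1}^m p_i C_iC_i'/r_i$. Since each $r_i>0$ is a fixed constant, $H_{\mathbf p}=\tilde C_{\mathbf p}'\tilde C_{\mathbf p}$ with $\tilde C_{\mathbf p}=[\sqrt{p_1/r_1}\,C_1',\ldots,\sqrt{p_m/r_m}\,C_m']'$, and detectability of $(C_{\mathbf p},A)$ is equivalent to detectability of $(\tilde C_{\mathbf p},A)$. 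Under this identification $L_k=L(L_{k-1},\mathbf p)$ is precisely the a posteriori error-covariance recursion of the Kalman filter for $x_{k+1}=Ax_k+w_k$, $y_k=\tilde C_{\mathbf p}x_k+v_k$ with process-noise covariance $Q$ and measurement-noise covariance $I$, so the lemma reduces to the convergence of this recursion.

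First I would treat the zero initial condition. Since $L(X,\mathbf p)$ is monotone increasing in $X$ by Theorem~\ref{theorem:lfunction}, and $L^{(1)}(0,\mathbf p)=(Q^{-1}+H_{\mathbf p})^{-1}\geq 0=L^{(0)}(0,\mathbf p)$, an easy induction shows $\{L^{(k)}(0,\mathbf p)\}$ is monotonically nondecreasing in the positive-semidefinite order. The crucial ingredient is an a priori upper bound: detectability of $(\tilde C_{\mathbf p},A)$ supplies a gain $K$ with $A-K\tilde C_{\mathbf p}$ stable, and substituting this (suboptimal) gain into the covariance recursion yields a bounded matrix sequence that dominates $L^{(k)}(0,\mathbf p)$. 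A monotone nondecreasing sequence bounded above converges, so $L^{(k)}(0,\mathbf p)\to\bar L=\bar L(\mathbf p)\geq 0$, a fixed point of $L(\cdot,\mathbf p)$.

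Next I would lift the result to an arbitrary $X\geq 0$. By monotonicity $L^{(k)}(0,\mathbf p)\leq L^{(k)}(X,\mathbf p)$, and for an upper envelope I would choose $\bar X\geq X$ large enough to be a supersolution, i.e.\ $L(\bar X,\mathbf p)\leq\bar X$ — such an $\bar X$ exists, e.g.\ from the bounded suboptimal-filter covariance above — so that $\{L^{(k)}(\bar X,\mathbf p)\}$ is monotonically nonincreasing and bounded below, hence convergent. Sandwiching forces $\lim_k L^{(k)}(X,\mathbf p)$ to exist. A short argument using continuity of $L$ together with the contraction properties of the Riccati map on the cone of positive semidefinite matrices (or a direct appeal to the classical Riccati convergence theorem) identifies these limits with a common value whenever the fixed point is unique. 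For the last claim, controllability of $(A,Q^{1/2})$ makes $\bar L(\mathbf p)>0$ and turns it into the unique stabilizing — hence unique positive semidefinite — solution of the corresponding discrete algebraic Riccati equation, so the limit is independent of $X$.

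The main obstacle is exactly the boundedness step: converting the detectability hypothesis into a uniform bound on the iterates, equivalently showing the Riccati operator does not blow up. This is classical but requires the suboptimal-gain comparison; everything afterward — monotonicity, sandwiching, and uniqueness under controllability of $(A,Q^{1/2})$ — is routine, and in the final write-up I expect we will simply cite the standard discrete-time Riccati convergence theorem rather than reprove it.
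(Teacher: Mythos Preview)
Your proposal is correct and essentially mirrors the paper's approach: both identify $L^{(k)}(X,\mathbf p)$ with the a posteriori error covariance of a Kalman filter for an auxiliary linear system with observation matrix $C_{\mathbf p}$ (or, equivalently, your rescaled $\tilde C_{\mathbf p}$ with identity measurement-noise covariance), and then invoke the classical discrete-time Riccati convergence theorem under detectability of $(C_{\mathbf p},A)$ and controllability of $(A,Q^{1/2})$. The paper simply cites that standard result, whereas you additionally sketch its proof via the monotone/sandwich/suboptimal-gain argument---but you correctly anticipate that a citation suffices.
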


\begin{proof}
Let us build a linear system whose dynamics are given by 
  \begin{displaymath}
  \begin{array}{lcr}
    \tilde x_{k+1} & = & A \tilde x_k + \tilde w_k, \\
     \tilde y_k & = & C_{\mathbf p} \tilde x_k + \tilde v_k.
  \end{array}
  \end{displaymath}
where $\tilde x_0 \sim \mathcal N(0,X)$, $\tilde w_k \sim \mathcal N(0,Q)$, $\tilde v_k \sim \mathcal N(0,R)$ and all of them are mutually independent of each other. Consider now the covariance matrix of the Kalman filter for the above system, which is given by 
\begin{eqnarray}
  \tilde P_{0} &=& X, \\
   \tilde P_{k+1|k} &= & A\tilde P_{k}A^T + Q,\, \\
   \tilde P_{k+1} &=& (\tilde P_{k+1|k}^{-1} + c_{\mathbf{p}}R^{-1} c_{\mathbf{p}}')^{-1} = \left(\tilde P_{k+1|k}^{-1} + \sum_{i=1}^m p_i \frac{C_iC_i'}{r_i}\right)^{-1}.
  \label{eq:alterkalman} 
\end{eqnarray}
By construction, such a covariance matrix satisfies $\tilde P_{k} = L^{(k)}(X,p)$ and hence the limit $\tilde P_\infty = \lim_{k\rightarrow \infty}\tilde P_k$ exists if $(C_{\mathbf p}, A)$ is detectable. Moreover, the limit is unique regardless of $\tilde P_0$ if $(A,\,Q^{1/2})$ is controllable.
\end{proof}
\noindent Another theorem on the uniqueness of the limit can also be provided:

\begin{lemma}
  \label{lemma:existence2}
Let $Q > 0$ be a strictly positive definite matrix. If there exists a fixed point $X_0$ satisfying 
\begin{displaymath}
  X_0 = L (X_0 , \mathbf p), 
\end{displaymath}
then $L^\infty(X,\mathbf p)$ exists and moreover 
\begin{displaymath}
  L^\infty(X,\mathbf p) = X_0, \,\,\textrm{for all }X\textrm{ positive semidefinite}. 
\end{displaymath}
\end{lemma}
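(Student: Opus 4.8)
The plan is to reduce this lemma to Lemma~\ref{lemma:existence1} by showing that the mere existence of a positive semidefinite fixed point $X_0=L(X_0,\mathbf p)$, together with the hypothesis $Q>0$, already forces $(C_{\mathbf p},A)$ to be detectable and $(A,Q^{1/2})$ to be controllable. The controllability part is immediate: since $Q>0$, the matrix $Q^{1/2}$ is nonsingular, so its columns span $\mathbb R^n$ and $(A,Q^{1/2})$ is trivially controllable. Detectability is the real content, and I would obtain it from a Lyapunov argument at the fixed point.

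First I would record that $X_0$ is in fact positive definite: writing $M\triangleq\sum_{i=1}^m p_iC_iC_i'/r_i\geq 0$, the fixed-point identity $X_0=\bigl((AX_0A'+Q)^{-1}+M\bigr)^{-1}$ shows $AX_0A'+Q\geq Q>0$, hence $(AX_0A'+Q)^{-1}+M>0$, and therefore $X_0>0$. Next I would rewrite the fixed-point equation in Joseph (covariance) form: putting $S_0\triangleq AX_0A'+Q$ and $K_0\triangleq S_0C_{\mathbf p}'(C_{\mathbf p}S_0C_{\mathbf p}'+R)^{-1}$ (the matrix $C_{\mathbf p}S_0C_{\mathbf p}'+R$ being invertible because $R>0$), the standard Joseph-form Kalman update identity yields $X_0=\bar A X_0\bar A'+\tilde Q$ with $\bar A\triangleq(I-K_0C_{\mathbf p})A$ and $\tilde Q\triangleq(I-K_0C_{\mathbf p})Q(I-K_0C_{\mathbf p})'+K_0RK_0'$. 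Using $Q>0$ and $R>0$ one checks that $w^*\tilde Qw=0$ forces $(I-K_0C_{\mathbf p})'w=0$ and $K_0'w=0$, whence $w=C_{\mathbf p}'K_0'w=0$; thus $\tilde Q>0$. Since $X_0>0$ and $\tilde Q>0$, the Lyapunov identity forces $\bar A$ to be Schur stable: for any eigenvalue $\mu$ of $\bar A$ with left eigenvector $u$ one gets $(1-|\mu|^2)\,u^*X_0u=u^*\tilde Qu>0$ and $u^*X_0u>0$, hence $|\mu|<1$.

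Detectability of $(C_{\mathbf p},A)$ then follows by the PBH test: if $v\neq 0$ satisfies $Av=\lambda v$ and $C_{\mathbf p}v=0$, then $\bar A v=(I-K_0C_{\mathbf p})Av=\lambda v$, so $\lambda$ is an eigenvalue of the Schur matrix $\bar A$ and $|\lambda|<1$; thus every unobservable mode of $(C_{\mathbf p},A)$ is stable. Now Lemma~\ref{lemma:existence1} applies: detectability of $(C_{\mathbf p},A)$ gives existence of $L^\infty(X,\mathbf p)$ for every positive semidefinite $X$, and controllability of $(A,Q^{1/2})$ gives that this limit is independent of $X$. Since $X_0$ is a fixed point, the constant sequence $L^{(k)}(X_0,\mathbf p)\equiv X_0$ converges to $X_0$, so $L^\infty(X_0,\mathbf p)=X_0$; by initial-condition independence, $L^\infty(X,\mathbf p)=X_0$ for every positive semidefinite $X$, which is the assertion.

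The main obstacle is the implication ``a positive semidefinite fixed point exists $\Rightarrow$ the induced closed-loop matrix $\bar A$ is Schur''; once the Joseph-form identity and the Lyapunov criterion for Schur stability are in hand, the rest is bookkeeping. A purely order-theoretic alternative would be to squeeze $L^{(k)}(X)$ between the nondecreasing sequence $L^{(k)}(0)$ and, for $t$ large enough that $X\leq tX_0$, the nonincreasing sequence $L^{(k)}(tX_0)$ (the inequality $L(tX_0,\mathbf p)\leq tX_0$ following from concavity of $L$ in $X$ and $L(0,\mathbf p)\geq 0$, both available from Theorem~\ref{theorem:lfunction}, together with $X_0>0$); but identifying the two limiting fixed points with $X_0$ still needs a uniqueness input equivalent to the detectability argument above, so that route is not actually shorter.
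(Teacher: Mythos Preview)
Your proof is correct. Both you and the paper reduce the lemma to Lemma~\ref{lemma:existence1} by establishing detectability of $(C_{\mathbf p},A)$ and controllability of $(A,Q^{1/2})$; the controllability part and the observation $X_0>0$ are handled identically. The difference is in how detectability is extracted from the fixed point.

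You take the algebraic route: rewrite $X_0=L(X_0,\mathbf p)$ in Joseph form as a discrete Lyapunov equation $X_0=\bar A X_0\bar A'+\tilde Q$, verify $\tilde Q>0$ from $Q>0$ and $R>0$, conclude $\bar A$ is Schur, and then use PBH. This is self-contained and makes the stabilizing nature of the fixed point explicit. The paper instead takes precisely the order-theoretic route you sketch and dismiss at the end: from concavity and $L(0,\mathbf p)>0$ it gets $L(\alpha X_0,\mathbf p)<\alpha X_0$ for all $\alpha>1$, hence $L^{(k)}(\alpha X_0,\mathbf p)$ is decreasing; since $X_0>0$, every $X\geq 0$ is dominated by some $\alpha_x X_0$, so all iterates are bounded. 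From boundedness of the Riccati iterates the paper then \emph{asserts} detectability of $(C_{\mathbf p},A)$ and invokes Lemma~\ref{lemma:existence1}. So the ``uniqueness input'' you anticipated is supplied there by the standard (but unproven-in-the-paper) fact that bounded filtered-Riccati iterates force detectability. Your Lyapunov/PBH argument is a cleaner substitute for that black box; the paper's argument, on the other hand, uses only properties of $L$ already recorded in Theorem~\ref{theorem:lfunction} and avoids the Joseph-form algebra.
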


\begin{proof}
First, we want to show that $L(X,\mathbf p)$ is strictly positive for any $X \geq 0$. By definition we have 
\begin{displaymath}
   L (X , \mathbf p) = \left[(AXA^T+Q)^{-1} + \sum_{i=1}^m p_i \frac{C_iC_i'}{r_i}\right]^{-1} \geq  \left(Q^{-1} + \sum_{i=1}^m p_i \frac{C_iC_i'}{r_i}\right)^{-1}>0 . 
\end{displaymath}
In particular, this implies that $X_0 > 0$. Now, because $L(X,\mathbf p)$ is concave in $X$, we obtain: 
\begin{displaymath}
  \frac{1}{\alpha}L(\alpha X_0,\mathbf p)< \frac{1}{\alpha  }L(\alpha X_0,\mathbf p) + \frac{\alpha -1}{\alpha }L(0,\mathbf p) \leq L(X_0,\mathbf p) = X_0.\;\forall \alpha > 1 
\end{displaymath}
As a result, $L(\alpha X_0,\mathbf p) < \alpha X_0$ and, exploiting the monotonicity of $L(X,\mathbf p)$, the following inequality holds 
\begin{displaymath}
 0 < L^{(k+1)}(\alpha X_0,\mathbf p) < L^{(k)}(\alpha X_0,\mathbf p). 
\end{displaymath}
Then $L^{(k)}(\alpha X_0,\mathbf p)$ is bounded regardless of $k$. Because $X_0 > 0$ for any $X$ positive semidefinite, there exists a scalar $\alpha_x>1$, such that $X \leq \alpha _x X_0$, then, using again the monotonicity of $L(X,\mathbf p)$, one can prove that $ L^{(k)}(X,\mathbf p) < L^{(k)}(\alpha_x X_0,\mathbf p)$ is also bounded regardless of $k$. Hence, the pair $(C_{\mathbf p}, A)$ must be detectable, which implies that $L^\infty(X,\mathbf p)$ exists for all $X$. Moreover, since $Q > 0$, the limit is unique and it must be $X_0$.
\end{proof}

Now we are ready to prove Theorem~\ref{theorem:algorithm}
\begin{proof}
  \begin{enumerate}
    \item It is easy to check that $C_{\mathbf p_0} =\sqrt{ \mathcal E_d /(\sum_{i=1}^m c_i)}C$ and $\mathbf p_0\in \mathcal P$. Since $(C,\,A)$ is detectable, $(\sqrt{ \mathcal E_d /(\sum_{i=1}^m c_i)}C,A)$ is also detectable and then $L_0$ exists. 
    \item[5)] By the definition of accumulation point, there is a subsequence $\mathbf p_{i_1} ,\mathbf p_{i_2},\ldots$ which converges to $\mathbf p^*$. For each index $i_k$ we have 
  \begin{displaymath}
    L(L_{i_k-1},\mathbf p_{i_k}) = L_{i_k}. 
  \end{displaymath}
  If we take the limit on both side and exploit the fact that $L(X,\mathbf p)$ is continuous, we obtain 
   \begin{displaymath}
   L(L_\infty,\mathbf p^*) = L_\infty, 
  \end{displaymath}
  and finally by Lemma~\ref{lemma:existence2}, the limit is unique. 
  \end{enumerate}
\end{proof}

\end{document}